\newtheorem{theorem}{Theorem}[section]
\newtheorem{corollary}[theorem]{Corollary}
\newtheorem{lemma}[theorem]{Lemma}
\newtheorem{proposition}[theorem]{Proposition}
\theoremstyle{definition}
\newtheorem{assumption}{Assumption}
\let\plainqed\qedsymbol
\newcommand{\claimqed}{$\lrcorner$}
\newcommand{\om}{\omega}
\newcommand{\Om}{\mathnormal{\Omega}}
\newcommand{\NN}{{\mathbb N}}
\newcommand{\RR}{{\mathbb R}}
\newcommand{\R}{{\mathbb R}}
\newcommand{\EE}{{\mathbb E}}
\newcommand{\PP}{{\mathbb P}}
\newcommand{\QQ}{{\mathbb Q}}
\newcommand{\cla}{{\mathcal A}}
\newcommand{\clb}{{\mathcal B}}
\newcommand{\calC}{{\mathcal C}}
\newcommand{\clc}{{\mathcal C}}
\newcommand{\clf}{{\mathcal F}}
\newcommand{\cll}{{\mathcal L}}
\newcommand{\clm}{{\mathcal M}}
\newcommand{\clp}{{\mathcal P}}
\newcommand{\calR}{{\mathcal R}}
\newcommand{\clr}{{\mathcal R}}
\newcommand{\cls}{{\mathcal S}}
\newcommand{\clu}{{\mathcal U}}
\newcommand{\clw}{{\mathcal W}}
\newcommand{\calX}{{\mathcal X}}
\newcommand{\cly}{{\mathcal Y}}
\newcommand{\BM}{\mbox{BM}}
\newcommand{\lip}{\mbox{\tiny{lip}}}
\newcommand{\veps}{\varepsilon}
\newcommand{\be}{\begin{equation}}
\newcommand{\ee}{\end{equation}}
\newcommand{\limsupp}{\overline{\lim}^{\bar\PP}}
\newcommand{\liminff}{\underline{\lim}^{\bar\PP}}
\numberwithin{equation}{section}
\title[Small Noise Filtering]{Some Large Deviations Asymptotics in Small Noise Filtering Problems}
\author{Anugu Sumith Reddy}  
\address{Anugu Sumith Reddy, International Centre for
	Theoretical Sciences-Tata Institute of Fundamental Research,
	Bangalore, India}\email{anugu.reddy@icts.res.in}
\author{Amarjit Budhiraja}
\address{Amarjit Budhiraja,
Department of Statistics and Operations Research, 304 Hanes Hall, University of North Carolina, Chapel Hill, NC 27599}\email{amarjit@unc.edu}
\author{Amit Apte} 
\address{Amit Apte, International Centre for
	Theoretical Sciences-Tata Institute of Fundamental Research,
	Bangalore, India}\email{apte@icts.res.in}
\keywords{Laplace asymptotics, large deviation principle, nonlinear filtering, small observation and signal noise, minimum energy estimate,  $4$DVAR}
\subjclass[2010]{60F10, 60G35, 93E11}
\begin{document}
\maketitle
\begin{abstract}
We consider nonlinear filters for diffusion processes when the observation and signal noises are small and of the same order. As the noise intensities approach zero, the nonlinear filter can be approximated by a certain variational problem that is closely related to Mortensen's optimization problem(1968). This approximation result can be made precise through a certain Laplace asymptotic formula. In this work we study probabilities of deviations of true filtering estimates from that obtained by solving the variational problem. Our main result gives a large deviation principle for Laplace functionals whose typical asymptotic behavior is described by Mortensen-type variational problems. Proofs rely on stochastic control representations for positive functionals of Brownian motions and Laplace asymptotics of the Kallianpur-Striebel formula.
\end{abstract}

\section{Introduction}
In this work we study certain large deviation asymptotics for nonlinear filtering problems with small signal and observation noise. As the noise in the signal and observation processes vanish, the filtering problem can formally be replaced by a variational problem and one may approximate the filtering estimates (namely suitable conditional probabilities or expectations) by solutions of certain deterministic optimization problems. However due to randomness there will be occasional large deviations of the true nonlinear filter estimates from the variational problem solutions. The main goal of this work is to investigate the probabilities of such deviations by establishing a suitable large deviation principle. Large deviations and related asymptotic problems in the context of small noise nonlinear filtering have been investigated, under different settings, in many  works \cite{hijab, Heunis1987nonlinear, baras1988dynamic, james1988nonlinear, parzei, berdjeouk, xiong, marxio2, marxio1, doss1991nouveau, rabeherimanana1992petites, arous1996flow}. We summarize the main results of these works and their relation to the asymptotic questions considered in the current work at the end of this section.

In order to describe our results precisely, we begin by introducing the filtering model that we study.
We consider a signal process $X^{\veps}$ given as the solution of the $d$-dimensional stochastic differential equation (SDE)
\begin{equation}\label{eq:sdex}
	dX^{\veps}(t) = b(X^{\veps}(t)) dt + \veps\sigma(X^{\veps}(t)) dW(t), \; X^{\veps}(0)= x_0, \; 0 \le t \le T,
\end{equation}
and an $m$-dimensional observation process $Y^{\veps}$ governed by the equation
\begin{equation}\label{eq:sdey}
	Y^{\veps}(t) = \int_0^t h(X^{\veps}(s)) ds + \veps B(t), \; 0 \le t \le T
\end{equation}
on some probability space $(\bar{\Om} , \bar{\clf}, \bar{\PP})$.
Here $\veps \in (0,\infty)$ is a small parameter, $T\in (0,\infty)$ is some given finite time horizon, $W$ and $B$ are mutually independent standard Brownian motions in
$\RR^k$ and $\RR^m$ respectively, $x_0\in \RR^d$ is known deterministic initial condition of the signal, and the functions $b,\sigma$ and $h$ are required to satisfy the following condition.
\begin{assumption} \label{assu:main} The following hold.
	\begin{enumerate}[(a)]
		\item The functions $b, \sigma, h$ from $\R^d \to \R^d$, $\R^d \to \R^{d\times k}$, $\R^d \to \R^m$ are Lipschitz: For some $c_{\lip}\in (0,\infty)$
		$$\|b(x)-b(y)\| + \|\sigma(x)-\sigma(y)\| + \|h(x)-h(y)\| \le c_{\lip} \|x-y\| \mbox{ for all } x,y\in \RR^d.$$
		\item The function $\sigma$ is bounded: For some $c_{\sigma}\in (0,\infty)$
		$$\sup_{x\in \R^d} \|\sigma(x)\| \le c_{\sigma}.$$
		\item The function $h$ is twice continuously differentiable with bounded first and second derivatives.
	\end{enumerate}
 \end{assumption}
 Note that under Assumption \ref{assu:main} there is a unique pathwise solution of \eqref{eq:sdex} and the solution is a stochastic process with sample paths in
 $\clc_d$ (the space of continuous functions from $[0,T]$ to $\RR^d$ equipped with the uniform metric).
 
 The filtering problem is concerned with the computation of the conditional expectations of the form
\begin{equation}
	\bar\EE\left[\phi(X^{\veps}) \mid \cly^{\veps}_T\right]
\end{equation}
where $\cly^{\veps}_T \doteq \sigma\{Y^{\veps}(s): 0 \le s \le T\}$ and 
$\phi: \clc_d \to \RR$ is a suitable map. The stochastic process with values in the space of probability measures on $\clc_d$, given by
$$\bar\PP\left[X^{\veps} \in \cdot\mid \cly^{\veps}_T\right]$$
is usually referred to as the nonlinear filter.

In this work we are interested in the study of the asymptotic behavior of the nonlinear filter as $\veps\to 0$.
 Denote by $\xi^* \in \clc_d$ the unique solution of
 \begin{equation}\label{eq:intro1}
 	d\xi^*(t) = b(\xi^*(t)) dt, \;\; \xi^*(0)= x_0.
 \end{equation}
 It can be shown that, under additional conditions (see discussion in Section \ref{sec:canpath}), that, as $\veps \to 0$,
 \begin{equation}\label{eq:todiracxistar}
 	\bar\PP\left[X^{\veps} \in \cdot\mid \cly^{\veps}_T\right] \to \delta_{\xi^*}, \mbox{ in probability, under } \bar\PP,
 \end{equation}
 weakly.
 In particular for Borel subsets $A$ of $\clc_d$ whose closure does not contain $\xi^*$ one will have $\bar\PP\left[X^{\veps} \in A\mid \cly^{\veps}_T\right]\to 0$ in probability as $\veps \to 0$.
 It is of interest to study the rate of decay of conditional probabilities of such non-typical state trajectories.
 As a special case of the results of the current paper (see Corollary \ref{cor:keycgcecor}) it will follow that
for every real continuous and bounded function $\phi$  on $\clc_d$,  denoting
\begin{equation}\label{Uveps}
U^{\veps}[\phi]\doteq \bar \EE\left[ e^{-\frac{1}{\veps^2} \phi(X^{\veps})} \mid \cly^{\veps}_T\right],
\end{equation}
\begin{equation}\label{eq:439}
	-\veps^2 \log U^{\veps}[\phi] \stackrel{ \bar\PP}{\longrightarrow}
	\inf_{\eta \in \clc_d} \left[\phi(\eta)+ \frac{1}{2} \int_0^T \|h(\eta(s)) - h(\xi^*(s))\|^2  + J(\eta)\right],
\end{equation}
where $\stackrel{ \bar\PP}{\longrightarrow}$ denotes convergence in probability under $\bar\PP$,
and $J$ is the rate function on $\clc_d$ associated with the large deviation principle for $\{X^{\veps}\}_{\veps>0}$ (see Section \ref{sec:canpath}).
From this convergence it follows using standard arguments (see e.g. \cite[Theorem 1.8]{budhiraja2019analysis}),
that, for all Borel subsets $A$ of $\clc_d$)
\begin{equation}\label{eq:450}
	\begin{aligned}
 	\liminff_{\veps\to 0} \veps^2 \log  \bar\PP\left[X^{\veps} \in A \mid \cly^{\veps}_T\right] &\ge  -\inf_{\eta \in A^o} \left[ \frac{1}{2} \int_0^T \|h(\eta(s)) - h(\xi^*(s))\|^2  + J(\eta)\right]
 	\\
 	 \limsupp_{\veps\to 0} \veps^2 \log\bar\PP\left[X^{\veps} \in A \mid \cly^{\veps}_T\right]& \le -\inf_{\eta \in \bar A} \left[ \frac{1}{2} \int_0^T \|h(\eta(s)) - h(\xi^*(s))\|^2
   + J(\eta)\right], 
\end{aligned}
\end{equation}
where for real random variables $Z^{\veps}$ and a constant $\alpha \in \RR$ we say
$\limsupp_{\veps\to 0} Z^{\veps} \le \alpha$ [ resp. $\liminff_{\veps\to 0} Z^{\veps} \ge \alpha$]
if $(Z^{\veps}-\alpha)^+$ [resp. $(\alpha -Z^{\veps})^+$]  converges to $0$ in $\bar\PP$-probability, and for a set $A$, $A^o$ and $\bar A$ denote its interior and closure respectively..

Thus the convergence in \eqref{eq:439} gives information on rate of decays of  conditional probabilities of  non-typical state trajectories. Formally, denoting the infimum in the above display as $S(\xi^*, A)$, we can write approximations for conditional probabilities:
\begin{equation}\label{eq:approxeq}\bar\PP\left[X^{\veps} \in A \mid \cly^{\veps}_T\right] \approx \exp\left\{-\frac{1}{\veps^2} S(\xi^*,A)\right\}.\end{equation}
However,  due to stochastic fluctuations, one may find that for some  `rogue' observation trajectories the  conditional probabilities on the left side of \eqref{eq:approxeq} are quite different from  the deterministic approximation on the right side of  \eqref{eq:approxeq}.
In order to quantify the probabilities of observing such rogue observation trajectories that cause deviations from the bounds in \eqref{eq:450}, a natural approach is to study a large deviation principle for $\RR$ valued random variables $\{-\veps^2 \log U^{\veps}[\phi]\}$ whose typical (law of large numbers) behavior is described by the right side of \eqref{eq:439}.
Establishing such a large deviation principle is the goal of this work. 
Such a result gives information on decay rates of probabilities of the form
$$
\begin{aligned} &\bar \PP\Big\{\Big|\veps^2 \log \bar \PP\left[ X^{\veps} \in A \mid \cly^{\veps}_T\right]\\
	& \quad\quad\quad  +\inf_{\eta \in A} \left[\frac{1}{2} \int_0^T \|h(\eta(s)) - h(\xi^*(s))\|^2  + J(\eta)\right]\Big| >\delta\Big\}\end{aligned}$$
for suitable sets $A \in \clb(\clc_d)$ and $\delta>0$.
Our main result is
Theorem \ref{thm:main} which gives a large deviation principle for $\{-\veps^2 \log U^{\veps}[\phi]\}$, for every continuous and bounded function $\phi$ on $\clc_d$ with a rate funcion defined by the variational formula in \eqref{eq:clszchA}-\eqref{eq:clszch}.\\

\noindent {\bf Notation.} The following notation and definitions will be used.  For $p \in \NN$ the Euclidean norm in $\RR^p$ will be denoted as $\|.\|$ and the corresponding inner product will be written as
$\langle\cdot , \cdot \rangle$. The space of finite positive measures (resp. probability measures) on a Polish space $S$ will be denoted by 
$\clm(S)$ (resp. $\clp(S)$). The space of bounded measurable (resp. continuous and bounded) functions from $S \to \RR$ will be denoted by
$\BM(S)$ and $C_b(S)$ respectively.   For $\phi \in \BM(S)$, $\|\phi\|_{\infty} \doteq \sup_{x\in S} |\phi(x)|$.
For $\phi \in \BM(S)$ and $\mu \in \clm(S)$, $\mu[\phi]\doteq \int \phi\, d\mu$. Borel $\sigma$-field on a Polish space $S$ will be denoted as $\clb(S)$.
For $p \in \NN$ and $T\in (0,\infty)$, $\clc_{p,T}$ will denote the space of continuous functions from $[0,T]$ to $\RR^p$ which is equipped with the supremum norm, defined as $\|f\|_{*,T} \doteq \sup_{0\le t \le T}\|f(t)\|$, $f \in \clc_{p,T}$. Since $T\in (0,\infty)$ will be fixed in most of this work, frequently the subscript $T$ in
$\clc_{p,T}$ and $\|f\|_{*,T}$ will be dropped. We denote by $\cll^2_p \doteq L^2([0,T]:\RR^p)$ the Hilbert space of square-integrable functions from $[0,T]$ to $\RR^p$.
By convention, the infimum over an empty set will be taken to be $\infty$. For random variables $X_n$, $X$ with values in some Polish space $S$, convergence in distribution of $X_n$ to $X$ will be denoted as $X_n \Rightarrow X$.
A function $I$ from a Polish space $S$ to $[0,\infty]$ is called a rate function if it has compact sub-level sets, namely the set $\{x\in S: I(x) \le m\}$ is a compact set of $S$ for every $m \in (0,\infty)$. 
Given a function $a: (0, \infty) \to (0, \infty)$ such that $a(\veps) \to \infty$ as $\veps \to 0$,  and a rate function $I$ on a Polish space $S$, a collection $\{U^{\veps}\}_{\veps>0}$ of $S$ valued random variables is  said to satisfy a large deviation principle (LDP) with rate function $I$ and speed $a(\veps)$ if for every $\phi \in C_b(S)$
$$\lim_{\veps \to 0}-a(\veps)^{-1} \log \EE e^{-a(\veps) \phi(U^{\veps}) } = \inf_{x\in S} [I(x) + \phi(x)].$$

\noindent {\bf Relation with existing  body of work.} 
 Denote by $\clc^1_m$ the collection of absolutely continuous functions $y \in \clc_m$ that satisfy $\int_0^T \|\dot y(s)\|^2 ds <\infty$.
 For $y \in \clc^1_m$ define $I_{y}:\clc_d \to [0,\infty]$ as
\begin{align}\label{eq:izeta}
I_y(\eta)=
\frac{1}{2}\int_0^T \|h(\eta(s))-\dot y(s)\|^2 ds + J(\eta)
\end{align}
 where $J$ is the rate function of $\{X^{\veps}\}$ defined in \eqref{eq:jdefn}. The functional $I_{y}$ was introduced in Mortensen\cite{Mortensen1968maximum}  as the objective function in an optimization problem whose minima describes the most probable trajectory given the data in a nonlinear filtering problem in an appropriate asymptotic sense. This functional is also used in implementing the popular $4$DVAR data assimilation algorithm (cf. \cite[Section 3.2]{carrassi2018data}, \cite[Chapter 16]{fletcher2017data}). Connection of the optimization problem associated with the objective function in \eqref{eq:izeta}  with the asymptotics of classical small noise filtering problem has been studied by several authors \cite{hijab,hijab1980minimum,james1988nonlinear}.
We now describe this connection.

In Section \ref{sec:canpath} we will introduce a continuous map $\hat \Lambda^{\veps}_T: \clc_m \to \clp(\clc_d)$ such that $\hat \Lambda^{\veps}_T(Y^{\veps}) = \bar \PP(X^{\veps} \in \cdot \mid \cly^{\veps}_T)$ a.s.
In \cite{hijab}, Hijab established, under conditions (that include boundedness and smoothness of various coefficients functions), a large deviation principle for the collection of probability measures  (on $\clc_d$) $\{\hat \Lambda^{\veps}_T(y)\}_{\veps>0}$ (with speed $\veps^{-2}$), for a fixed $y$ in $\clc^1_m$, with  rate function  $\hat I_{y}: \clc_d \to [0,\infty]$ given by
\begin{equation}
	\hat I_{y}(\eta) = I_{y}(\eta) - \inf_{\hat{\eta} \in \clc_d}\{ I_{y}(\hat{\eta})\}.
\end{equation}
In a related direction, Hijab's Ph.D. dissertation \cite{hijab1980minimum}, studied asymptotics of the unnormalized conditional density and established, under conditions, an asymptotic formula of the form
$$q^{\veps}(x,t) = \exp\left\{- \frac{1}{\veps^2} (W(x,t)+o(1))\right\},$$
where $q^{\veps}(x,t)$ denotes the solution of the Zakai equation associated with the nonlinear filter (cf. \cite{kallianpur2013stochastic}). The deterministic function $W(x,t)$ coincides with Mortensen's (deterministic) minimum energy estimate \cite{Mortensen1968maximum} which is given as solution of a certain minimization problem related to the objective function $I_{y}(\eta)$. 
Results of Hijab were extended to random initial conditions in \cite{james1988nonlinear}, once again assuming boundedness and smoothness of coefficients. In related work, the problem of constructing observers for dynamical systems as limits of stochastic nonlinear filters is studied in \cite{baras1988dynamic}.  Heunis\cite{Heunis1987nonlinear} studies a somewhat different asymptotic problem for small noise nonlinear filters. Specifically, it is shown in \cite{Heunis1987nonlinear}, that for every $\phi \in C_b(\clc_d)$, $w\in \clc_m$, and for any $\eta \in \clc_d$ for which the map defined in \eqref{eq:jhh} has a {\em unique} minimizer (at say $\eta^*$),
$$\hat \Lambda^{\veps}_T \left(\int_0^{\cdot} h(\eta(s)) ds + \veps w\right)[\phi] \to \phi(\eta^*), \; \mbox{ as }  \veps \to 0.$$
This result and its connection to our work are further discussed in Section \ref{sec:canpath}. In particular the statement in \eqref{eq:todiracxistar} follows readily on using similar ideas as in \cite{Heunis1987nonlinear}.
The work of Pardoux and Zeitouni\cite{parzei} considers a one dimensional nonlinear filtering problem where the observation noise is small while the signal noise is $O(1)$ (specifically, the term $\veps \sigma(X^{\veps}(t))$ in \eqref{eq:sdex} is replaced by $1$). In this case the conditional distribution of $X(T)$ given $\cly^{\veps}_T$ converges a.s. to a Dirac measure $\delta_{X(T)}$ as $\veps \to 0$. The paper \cite{parzei}  proves a quenched 
LDP for this conditional distribution (regarded as a collection of probability measures on $\clc_d$ parametrized by $X(T)(\omega)$) in $\clc_d$. In a somewhat different direction, in a sequence of papers \cite{xiong, marxio1, marxio2}, the authors have studied asymptotics of the filtering problem under a small signal to noise ratio limit, under various types of model settings. In this case the nonlinear filter converges to the unconditional law of the signal and the authors establish large deviation principles characterizing probabilities of deviation of the filter from the above deterministic law. An analogous result in a correlated signal-observation noise case was studied in \cite{berdjeouk}. Finally, yet another type of large deviation problem in the context of nonlinear filtering (with correlated signal-observation noise) when the observation noise is $O(1)$ and the signal noise and drift are suitably small has been considered in a series of papers \cite{doss1991nouveau, rabeherimanana1992petites, arous1996flow}.

The closest connections of the current work are with \cite{hijab} and \cite{Heunis1987nonlinear}. Specifically, the asymptotic statements in \eqref{eq:439} and  \eqref{eq:450} which follow as a special case of our results (see Corollary \ref{cor:keycgcecor}) is analogous to results in \cite{hijab}, except that instead of  a fixed observation path we consider the actual observation process $Y^{\veps}$ (also we make substantially weaker assumption on coefficients than \cite{hijab}). However our main interest is in a large deviation principle for the convergence to the deterministic limit in \eqref{eq:439} , thus roughly speaking we are interested in quantifying the probability of deviations from the convergence statement in \cite{hijab} (when a fixed observation path is replaced with the observation process $Y^{\veps}$). This large deviation result, given in Theorem \ref{thm:main},
is the main contribution of our work.
\\

\noindent{\bf Proof idea.}
The proof of Theorem \ref{thm:main} is based on a variational representation for functionals of Brownian motions obtained in \cite{boue1998variational} (see also \cite{budhiraja2000variational}) using which the proof of the large deviation principle reduces to proving a key weak convergence result given in Lemma \ref{lem:keycgce}. Proof of Lemma \ref{lem:keycgce} is the technical heart of this work. Important use is made of some key estimates obtained in \cite{Heunis1987nonlinear} (see in particular Proposition \ref{prop:heunis}). One of the key steps is to argue that terms of order $\veps^{-1}$ can be ignored in the exponent when studying Laplace asymptotics for the quantity on the left side of  \eqref{eq:repboudup}. This relies on several careful large deviation exponential estimates which are developed in Section \ref{sec:pflem4.1}. Once Lemma \ref{lem:keycgce} is available the proof of the large deviation principle in Theorem \ref{thm:main} follows readily using the now well developed weak convergence approach for the study of large deviation problems (cf. \cite{budhiraja2019analysis}).
\\

\noindent{\bf Organization.} It will be convenient to formulate the filtering problem on canonical path spaces and also to represent the nonlinear filter through a map given on the path space of the observation process. This formulation and our main result (Theorem \ref{thm:main})  are given in Section \ref{sec:canpath}.  The key idea in the proof of the LDP is a variational representation from \cite{boue1998variational}. A somewhat simplified version of this representation (cf. \cite{budhiraja2019analysis}) that is used in this work is presented in Section \ref{sec:varrep}. Section \ref{sec:keylem} presents a key lemma (Lemma \ref{lem:keycgce}) that is needed for implementing the weak convergence method for proving the large deviation result in Theorem \ref{thm:main}. Section \ref{sec:pflem4.1} is devoted to the proof of Lemma \ref{lem:keycgce}. Using this lemma, the proof of Theorem \ref{thm:main} is completed in Section \ref{sec:pfmainth}. 
\\

\section{Setting and Main Result}
\label{sec:canpath}

Recall that $X^{\veps}$ has sample paths in $\clc_d$.  Similarly, the processes $Y^{\veps}, W, B$ have sample paths in $\clc_m, \clc_k, \clc_m$ respectively.
It will be convenient to formulate the filtering problem on suitable path spaces.
Denote, for $p\in \NN$, the standard Wiener measure on $(\clc_p, \clb(\clc_p))$ as $\clw_p$ and the Wiener measure with variance parameter $\veps^2$ as
$\clw_p^{\veps}$.
Denote the canonical coordinate process on $(\clc_k, \clb(\clc_k))$ as $\{\gamma(t): 0 \le t \le T\}$
and consider the SDE on the probability space $(\clc_k, \clb(\clc_k), \clw_k)$,
$$dx^{\veps}(t) = b(x^{\veps}(t)) dt + \veps\sigma(x^{\veps}(t)) d\gamma(t), \; x^{\veps}(0)= x_0, \; 0 \le t \le T.$$
From Assumption \ref{assu:main}, the above SDE has a unique strong solution with sample paths in $\clc_d$.

Consider the map $\clc_k \to \Om_x\doteq \clc_d\times \clc_k$ defined as
$\om \mapsto (x^{\veps}(\om), \gamma(\om))$ and let
$$\mu^{\veps} \doteq \clw_k \circ (x^{\veps}, \gamma)^{-1}.$$
Next, let $\Om_y \doteq \clc_m$ and consider the probability space
$$(\Om, \clf, \QQ^{\veps}) \doteq (\Om_x\times \Om_y, \clb(\Om_x)\otimes \clb(\Om_y), \mu^{\veps}\otimes \clw^{\veps}_m).$$
Abusing notation, denote the coordinate maps on the above probability space as $\xi, \gamma,  \zeta$, namely
$$\xi(\om)= \om_1, \;\; \gamma (\om) = \om_2, \;  \; \zeta(\om) = \om_3 \mbox{ for } \om = ((\om_1, \om_2), \om_3) \in \Om_x\times \Om_y.$$
We will frequently write $\xi(\om)(s)$ as $\xi(s)$ for $(\om, s)\in \Om \times [0,T]$. Similar notational shorthand will be followed for other coordinate maps.

Note that, under $\QQ^{\veps}$, $\xi(0) = x_0$, $\gamma$ and $\veps^{-1}\zeta$ are independent standard Brownian motions in $\RR^k$ and $\RR^m$ respectively and
\begin{equation}\label{eq:sigcanon}
	\xi(t) = x_0 + \int_0^t b(\xi(s)) ds + \veps\int_0^t \sigma(\xi(s)) d \gamma(s), \; 0 \le t \le T.
\end{equation}
Define, for $\QQ^{\veps}$ a.e. $\om = ((\om_1, \om_2), \om_3)$, for $t \in [0,T]$,
$$L^{\veps}_t(\om) \doteq \exp \left\{ \frac{1}{\veps^2} \int_0^t \langle h(\xi(s)), d\zeta(s)\rangle - \frac{1}{2\veps^2} \int_0^t \|h(\xi(s))\|^2 ds \right\}.$$
Note that, since under $\QQ^{\veps}$, $\veps^{-1}\zeta$ is a standard Brownian martingale with respect to the filtration 
$\clf_t^0 \doteq \sigma\{\gamma(s), \xi(s), \zeta(s): 0 \le s \le t\}$, the first integral in the exponent is well-defined as an It\^{o} integral.
From the independence of $\xi$ and $\zeta$ under $\QQ^{\veps}$ and Assumption \ref{assu:main} it follows that
$L^{\veps}_t$ is a $\{\clf^0_t\}$-martingale under $\QQ^{\veps}$. Define a probability measure $\PP^{\veps}$ on $(\Om, \clf)$ as
$$\frac{d\PP^{\veps}}{d\QQ^{\veps}}(\om) \doteq L^{\veps}_T(\om), \; \QQ^{\veps} \mbox{ a.e. } \om .$$
Note that, by Girsanov's theorem, under $\PP^{\veps}$
\begin{equation}
	\beta(t) \doteq \frac{1}{\veps}\zeta(t) - \frac{1}{\veps} \int_0^t h(\xi(s)) ds , \; 0 \le t \le T
\end{equation}
is a standard $m$-dimensional Brownian motion which is independent of $(\xi, \gamma)$.
Rewriting the above equation as
$$\zeta(t) = \int_0^t h(\xi(s)) ds + \veps \beta(t), \; 0 \le t \le T,$$
we see that
\begin{equation}
	\bar \PP \circ (X^{\veps}, Y^{\veps})^{-1} = \PP^{\veps} \circ (\xi, \zeta)^{-1}.
\end{equation}
Next, for $\veps>0$, define $\Gamma^{\veps}_T: \clc_m \to \clm(\clc_d)$ as
\begin{equation}
	\Gamma^{\veps}_T(\om_3)[A] \doteq \int_{\Om_x} 1_A(\om_1) L^{\veps}_T((\om_1, \om_2), \om_3) d\mu^{\veps}(\om_1, \om_2), \; \om_3 \in \clc_m,\; A \in \clb(\clc_d).
\end{equation}
The maps  are well defined $\PP^{\veps}$-a.s. and using results of  \cite{clark, davis1,davis2}, one can obtain  versions of these maps (denoted as $\hat\Gamma^{\veps}_T$) which are continuous on $\clc_m$.
Also, define $\Lambda^{\veps}_T: \clc_m \to \clp(\clc_d)$ as
\begin{equation}
	\Lambda^{\veps}_T(\om)[A] \doteq \frac{\Gamma^{\veps}_T(\om)[A]}{\Gamma^{\veps}_T(\om)[\clc_d]}, \; \PP^{\veps}\mbox{-a.e. } \om \in \clc_m,\; A \in \clb(\clc_d).
\end{equation}
Once again, for each $\veps>0$, this map is well defined $\PP^{\veps}$-a.s.  and a continuous version of the map exists (which we denote as  $\hat \Lambda^{\veps}_T$) from  \cite{clark, davis1,davis2} .
Write, for $f \in \BM(\clc_d)$
$$\Gamma^{\veps}_T(f,\om)\doteq \int_{\clc_d} f(\tilde\om) \Gamma^{\veps}_T(\om)[d\tilde \om], \; \Lambda^{\veps}_T(f,\om) \doteq \int_{\clc_d} f(\tilde\om) \Lambda^{\veps}_T(\om)[d\tilde \om], \; \PP^{\veps}\mbox{-a.e. } \om \in \clc_m.$$
Then with $(X^{\veps}, Y^{\veps})$ as in \eqref{eq:sdex}-\eqref{eq:sdey}, for $\phi \in \BM(\clc_d)$
\begin{equation}\label{Lambdadefn}
	\bar \EE\left[\phi(X^{\veps}) \mid \cly^{\veps}_T\right] = \Lambda^{\veps}_T(\phi, Y^{\veps}) \mbox{ a.s. } \bar \PP.
\end{equation}
Also,
\begin{equation}
	 \EE_{\PP^{\veps}}\left[\phi(\xi) \mid \sigma\{\zeta(s): 0 \le s \le T\}\right] = \Lambda^{\veps}_T(\phi, \zeta) \mbox{ a.s. } \PP^{\veps},
\end{equation}
where $\EE_{\PP^{\veps}}$ denotes the expectation under the probability measure $\PP^{\veps}$, and
\begin{equation}\label{eq:samelaw}
	\bar \PP \circ (X^{\veps}, Y^{\veps}, W, B, \Lambda^{\veps}_T(\phi, Y^{\veps}))^{-1} = \PP^{\veps} \circ (\xi, \zeta, \gamma, \beta, \Lambda^{\veps}_T(\phi, \zeta))^{-1}.
\end{equation}
Let for $\xi_0 \in \clc_d$,
\begin{equation}\label{eq:jdefn}
	J(\xi_0) \doteq \inf_{\varphi \in \clu(\xi_0)}\left[ \frac{1}{2} \int_0^T \|\varphi(t)\|^2 dt\right]\end{equation}
where
$\clu(\xi_0)$ is the collection of all $\varphi$ in $\cll^2_k$ such that
\begin{equation}\label{eq:eqxio}
	\xi_0(t) = x_0 + \int_0^t b(\xi_0(s)) ds + \int_0^t \sigma(\xi_0(s)) \varphi(s) ds, \; t \in [0,T].
\end{equation}
Note that, by Assumption \ref{assu:main}, for every $\varphi \in \cll^2_k$ there is a unique solution of \eqref{eq:eqxio}.
By classical results of Freidlin and Wentzell (see e.g. \cite[Theorem 10.6]{budhiraja2019analysis})
the collection $\{X^{\veps}\}$ of $\clc_d$ valued random variables satisfies a LDP with rate function $J$ and speed $\veps^{-2}$, namely, for all
$F \in C_b(\clc_d)$
\begin{equation}\label{eq:ldpsign}
	\lim_{\veps \to 0}-\veps^2 \log \int_{\Om_x} \exp\left\{-\frac{1}{\veps^2} F(\hat\xi)\right\} d\mu^{\veps} = \inf_{\xi_0 \in \clc_d}\left[ F(\xi_0) + J(\xi_0)\right],
\end{equation}
where  we denote the first coordinate process on $\Om_x$ by $\hat\xi$, i.e. $\hat\xi(\om) = \om_1$ for $\om = (\om_1, \om_2) \in \Om_x = \clc_d\times \clc_k$.
In \cite{Heunis1987nonlinear} it is shown  that for every  $w \in \clc_m$, and a given $\eta \in \clc_d$, the probability measure
\begin{equation}
	\hat \Lambda_T^{\veps}\left(\int_0^{\cdot} h(\eta(s))ds + \veps w(\cdot) \right) \to \delta_{\eta^*}
\end{equation}
weakly,
{\em if}  the 
 map
\begin{equation}\label{eq:jhh}
	\tilde \eta \mapsto J(\tilde \eta) +\frac{1}{2} \int_0^T  \|h(\eta(s)) - h(\tilde \eta(s))\|^2 ds
	\end{equation}
attains its infimum over $\clc_d$  {\em uniquely} at $\eta^*$,
where recall that   $\hat \Lambda^{\veps}_T$ is the continuous version of $\Lambda^{\veps}_T$.
We remark that \cite{Heunis1987nonlinear} assumes in addition to \eqref{assu:main} that $h$ and $b$ are bounded, but an examination of the proof shows (see calculations in Section \ref{sec:pflem4.1}) that these conditons can be replaced by linear growth conditions that are implied by Assumption \ref{assu:main} .
 
 Recall the function $\xi^* \in \clc_d$ from \eqref{eq:intro1}.
 Then using similar ideas as in \cite{Heunis1987nonlinear}, under Assumption \ref{assu:main}, and assuming in addition that either $\sigma \sigma^\dagger$ is positive definite or $h$ is a one-to-one function, it follows that
 \begin{equation}
 	\Lambda_T^{\veps} \to \delta_{\xi^*}, \mbox{ in probability, under } \PP^{\veps},
 \end{equation}
 weakly, as $\veps \to 0$.
 This is a consequence of the fact that when  $\eta = \xi^*$ the map in \eqref{eq:jhh} achieves its minimum (which is $0$) uniquely at $\xi^*$.

 As a consequence of the results of the current paper (see Corollary \ref{cor:keycgcecor}) one can show the Laplace asymptotic formula in \eqref{eq:439}. Recall from the discussion in the Introduction that the convergence in \eqref{eq:439} gives information on asymptotics  of  conditional probabilities of  non-typical state trajectories. 
In order to quantify the decay rate of probabilities of observing rare observation trajectories that cause deviations from the deterministic variational quantity in \eqref{eq:439}, we will establish a large deviation principle for  $\{-\veps^2 \log U^{\veps}[\phi]\}$ defined in \eqref{Uveps}.

We now present the rate function associated with this LDP.

Define the map $H: \clc_d \times \clc_d \times \cll^2_m\to \RR_+$ as
\begin{equation}
	H(\eta, \tilde \eta, \psi) \doteq \frac{1}{2} \int_0^T \|h(\eta(s)) - h(\tilde \eta(s)) - \psi(s)\|^2 ds.
\end{equation}
Also, for $\varphi \in \cll^2_k$, let $\xi_0^{\varphi}$ be given as the unique solution of \eqref{eq:eqxio}.

We now introduce the rate function that will govern the large deviation asymptotics of $-\veps^2 \log U^{\veps}[\phi]$.

Fix $\phi \in C_b(\clc_d)$ and define $I^{\phi}: \RR \to [0,\infty]$ as
\begin{equation}\label{eq:clszchA}
	I^{\phi}(z) = \inf_{(\varphi, \psi) \in \cls(z)} \left[\frac{1}{2} \int_0^T \|\varphi(t)\|^2 dt + \frac{1}{2} \int_0^T \|\psi(t)\|^2 dt\right]
\end{equation}
where $\cls(z)$ is the collection of all $(\varphi, \psi)$ in
$\cll^2_k\times \cll^2_m$ such that
\begin{equation} \label{eq:clszch}
	\inf_{\eta \in \clc_d} \left[ H(\eta,\xi_0^{\varphi},  \psi) + \phi(\eta) + J(\eta)\right]
	- \inf_{\eta \in \clc_d} \left[ H(\eta,\xi_0^{\varphi}, \psi)  + J(\eta)\right] = z.
\end{equation}

The following is the main result of the work.
\begin{theorem}
	\label{thm:main}
	Suppose that Assumption \ref{assu:main} is satisfied. Then for every $\phi \in C_b(\clc_d)$, the collection $\{-\veps^2 \log U^{\veps}[\phi]\}$
	satisfies a large deviation principle on $\RR$ with rate function $I^{\phi}$ and speed $\veps^{-2}$.
\end{theorem}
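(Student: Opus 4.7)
The plan is to follow the weak convergence approach to large deviations, using the Boué-Dupuis variational representation for positive functionals of Brownian motion as the main input, exactly as the paper advertises. By \eqref{eq:samelaw}, the law of $U^{\veps}[\phi]$ under $\bar\PP$ equals the law of $\Lambda^{\veps}_T(e^{-\phi/\veps^2},\zeta)$ under $\PP^{\veps}$, and on the canonical space $(\xi,\zeta)$ is a measurable functional of the driving Brownian motions $(\gamma,\beta)$ via \eqref{eq:sigcanon} and $\zeta=\int_0^\cdot h(\xi(s))\,ds+\veps\beta$. Hence $G^{\veps}\doteq -\veps^2\log U^{\veps}[\phi]$ is a measurable functional of $(\gamma,\beta)$, and by the variational representation, for every $F\in C_b(\RR)$,
\[
-\veps^2\log \EE\bigl[e^{-F(G^{\veps})/\veps^2}\bigr]=\inf_{(\varphi,\psi)}\EE\!\left[\tfrac12\!\int_0^T\!\|\varphi(s)\|^2ds+\tfrac12\!\int_0^T\!\|\psi(s)\|^2ds+F\bigl(G^{\veps,\varphi,\psi}\bigr)\right],
\]
where the infimum is over admissible controls in $\cll^2_k\times\cll^2_m$ and $G^{\veps,\varphi,\psi}$ denotes $G^{\veps}$ evaluated along the shifted Brownians $\gamma+\veps^{-1}\!\int_0^\cdot\varphi$ and $\beta+\veps^{-1}\!\int_0^\cdot\psi$.

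To conclude the LDP with rate function $I^{\phi}$ at speed $\veps^{-2}$, by the standard machinery of \cite{budhiraja2019analysis} it is enough to show a Laplace upper bound (via tightness of near-minimizing controls $(\varphi^{\veps},\psi^{\veps})$ in the weak topology of $\cll^2_k\times\cll^2_m$) and a Laplace lower bound (via construction of explicit near-optimal controls from the variational definition \eqref{eq:clszchA}--\eqref{eq:clszch}). Both reduce to the weak-convergence statement of Lemma \ref{lem:keycgce}: if $(\varphi^{\veps},\psi^{\veps})$ is a collection of uniformly $L^2$-bounded admissible controls with $(\varphi^{\veps},\psi^{\veps})\Rightarrow(\varphi,\psi)$ in the weak topology, then
\[
G^{\veps,\varphi^{\veps},\psi^{\veps}}\Rightarrow z(\varphi,\psi)\doteq \inf_{\eta}\bigl[H(\eta,\xi_0^{\varphi},\psi)+\phi(\eta)+J(\eta)\bigr]-\inf_{\eta}\bigl[H(\eta,\xi_0^{\varphi},\psi)+J(\eta)\bigr].
\]
Given Lemma \ref{lem:keycgce}, the upper bound is obtained by passing to a weak limit along near-minimizers and applying Fatou; the lower bound is obtained by taking deterministic $(\varphi,\psi)$ that approximately achieves $I^{\phi}(z)+F(z)$, forming the controls constantly, and using convergence of expectations because $F$ is bounded continuous.

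The entire weight of the proof therefore sits in Lemma \ref{lem:keycgce}. The argument I would give first notes that under the controlled dynamics, $\xi^{\veps}\to\xi_0^{\varphi}$ in probability on $\clc_d$ by a Freidlin--Wentzell/Gronwall estimate, and that the controlled observation becomes $\zeta^{\veps}\approx\int_0^\cdot h(\xi_0^{\varphi})ds+\int_0^\cdot\psi\,ds$ plus an $\veps$-order error. Writing $G^{\veps,\varphi^{\veps},\psi^{\veps}}$ as $-\veps^2\log$ of a Kallianpur--Striebel ratio and carrying out Laplace's method on numerator and denominator separately would formally give the stated $z(\varphi,\psi)$ via \eqref{eq:ldpsign}. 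The genuine obstacle, and what Section \ref{sec:pflem4.1} is devoted to, is that the exponent of $L^{\veps}_T$ along the controlled trajectories contains a stochastic integral of order $\veps^{-1}$ (coming from $\veps^{-1}\int_0^t\langle h(\xi),d\zeta\rangle$ with the $\psi^{\veps}$-shift in $\zeta$), and one must show these $\veps^{-1}$ contributions do not survive in the Laplace asymptotics. This requires exponential moment estimates and exponential martingale bounds uniform over the bounded controls, leveraging the boundedness of $\sigma$ and the Lipschitz/$C^2$ structure of $h$ from Assumption \ref{assu:main}, together with the sharp estimates in Proposition \ref{prop:heunis} adapted from \cite{Heunis1987nonlinear}. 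Once these exponential estimates are in hand and the $\veps^{-1}$ terms are absorbed, the Laplace principle applied to the filtered expectations yields the infimum formulas defining $z(\varphi,\psi)$ and completes Lemma \ref{lem:keycgce}, from which Theorem \ref{thm:main} follows.
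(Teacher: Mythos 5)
Your proposal follows the paper's proof essentially verbatim: the Bou\'{e}--Dupuis representation \eqref{eq:repboudup}, the reduction of both Laplace bounds to the single weak-convergence statement of Lemma \ref{lem:keycgce} (upper bound via localization/tightness of near-minimizing controls and Fatou, lower bound via deterministic near-optimal controls), and the identification of the $\veps^{-1}$ stochastic-integral term in the Kallianpur--Striebel exponent as the main technical obstacle, handled through the Heunis-type exponential estimates of Proposition \ref{prop:heunis}. This is exactly the structure of Sections \ref{sec:varrep}--\ref{sec:pfmainth}.

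One genuine omission: the Laplace upper and lower bounds alone do not yield the LDP under the paper's definitions --- you also need to verify that $I^{\phi}$ has compact sub-level sets, i.e.\ \eqref{eq:levset}. This is not automatic from the variational formula \eqref{eq:clszchA}--\eqref{eq:clszch} and the paper devotes Section \ref{sec:cptlev} to it: given $z_n$ in a sub-level set one extracts weakly convergent near-optimal controls $(\varphi_n,\psi_n)$, shows the two infima in \eqref{eq:clszch} can be restricted to a fixed compact set $K\subset\clc_d$ of bounded $J$-cost, and then proves uniform convergence on $K$ of the (rewritten) functional $\tilde H(\cdot,\xi_0^{\varphi_n},\psi_n)$, so that $z_n\to z^*=V_0^{\varphi,\psi}[\phi]$. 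The subtlety is that $(\varphi,\psi)\mapsto H(\eta,\xi_0^{\varphi},\psi)$ is not weakly continuous because of the quadratic term $\tfrac12\int\|\psi\|^2$, which is why the paper first cancels that term between the two infima before passing to the limit. You should add this step to make the argument complete.
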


\section{A Variational Representation}
\label{sec:varrep}
Fix $\phi \in C_b(\clc_d)$.
Recall the functional $U^{\veps}[\phi]$ from \eqref{Uveps}.
From \eqref{Lambdadefn}, note that one can write $U^{\veps}[\phi]$ as
$$U^{\veps}[\phi] = \Lambda^{\veps}_T\left( \exp\{-\veps^{-2} \phi(\cdot)\}, Y^{\veps}\right)$$
whose distribution under $\bar \PP$ is same as the distribution of $\Lambda^{\veps}_T\left( \exp\{-\veps^{-2} \phi(\cdot)\}, \zeta\right)$ under $\PP^{\veps}$. Let 
$$V^{\veps}[\phi] = -\veps^2 \log \Lambda^{\veps}_T\left( \exp\{-\veps^{-2} \phi(\cdot)\}, \zeta\right).$$

Using this equality of laws and the equivalence between Large deviation principles and Laplace principles (see e.g. \cite[Theorems 1.5 and 1.8]{budhiraja2019analysis}), in order to prove Theorem \ref{thm:main} it suffices to show that, $I^{\phi}$ has compact sub-level sets, i.e., 
\begin{equation}
	\label{eq:levset}
	\mbox{ for every } m \in \RR_+,  \{z \in \RR: I^{\phi}(z) \le m\}  \mbox{ is compact,}
\end{equation}
and for every
$G\in C_b(\RR)$
\begin{equation}\label{eq:mainvar}
 \lim_{\veps \to 0}-\veps^2 \log \EE_{\PP^{\veps}}\left[ \exp\left\{ - \veps^{-2} G(V^{\veps}[\phi])\right\}\right]
	= \inf_{z\in \RR}\{ G(z) + I^{\phi}(z)\}.
\end{equation}

The proof of the identity in \eqref{eq:mainvar} will use a variational representation for nonnegative functionals of Brownian motions given by Bou\'{e} and Dupuis\cite{boue1998variational}.
We now use this representation to give a variational formula for the left side of the above equation.
Let $\clf_t$ denote the $\PP^{\veps}$-completion of $\clf^0_t$ and denote by
$\cla^k$ [resp. $\cla^m$] the collection of all $\{\clf_t\}$-progressively measurable $\R^k$ [resp. $\R^m$] valued processes $g$ such that for some $M= M(g)\in (0,\infty)$
$$\int_0^T \|g(s)\|^2 ds \le M \;\; \mbox{a.s.}$$
For $(u,v) \in \cla^k \times \cla^m$, let $\xi^{u}$ be given as the unique solution of the SDE on $(\Om, \clf, \{\clf_t\}, \PP^{\veps})$:
\begin{equation}\label{eq:sigcanonCont}
	\xi^{u}(t) = x_0 + \int_0^t b(\xi^u(s)) ds + \veps\int_0^t \sigma(\xi^u(s)) d \gamma(s) + \int_0^t \sigma(\xi^u(s)) u(s) ds, \; 0 \le t \le T.
\end{equation}
Also define
\begin{equation}
	\zeta^{u,v}(t) = \int_0^t h(\xi^u(s)) ds + \veps \beta(t) + \int_0^t v(s) ds, \; 0 \le t \le T.
\end{equation}
Occasionally, to emphasize the dependence of above processes on $\veps$ we will write $(\xi^u, \zeta^{u,v})$ as $(\xi^{\veps,u}, \zeta^{\veps,u,v})$.

Now let
\begin{equation}
\bar V^{\veps, u,v}[\phi] \doteq -\veps^2 \log \Lambda^{\veps}_T\left( \exp\{-\veps^{-2} \phi(\cdot)\}, \zeta^{\veps,u,v}\right).
\end{equation}
When clear from context we will drop $(u,v, \phi)$ from the notation in $\bar V^{\veps, u,v}[\phi]$ and simply write $\bar V^{\veps}$.
Then it follows from \cite{boue1998variational} (cf. \cite[Theorems 3.17]{budhiraja2019analysis}) that
\begin{equation}\label{eq:repboudup}
	\begin{aligned}
		&
	-\veps^2 \log \EE_{\PP^{\veps}}\left[ \exp\left\{ - \veps^{-2} G(V^{\veps}[\phi])\right\}\right]\\
	&= \inf_{(u,v) \in \cla^k\times \cla^m} \EE_{\PP^{\veps}}\left[G(\bar V^{\veps, u,v}[\phi]) + \frac{1}{2} \int_0^T (\|u(s)\|^2 + \|v(s)\|^2) ds \right].
	\end{aligned}
\end{equation}
\section{A Key Lemma}
\label{sec:keylem}
For $M \in (0, \infty)$, let 
$$S_M \doteq \{(\varphi, \psi) \in \cll^2_k\times \cll^2_m: \int_0^T (\|\varphi(s)\|^2 + \|\psi(s)\|^2) ds \le M\}.$$
We equip, $S_M$ with the weak topology under which $(\varphi_n, \psi_n) \to (\varphi, \psi)$ as $n \to \infty$ if and only if for all $(f,g) \in \cll^2_k\times \cll^2_m$
$$\int_0^T [\langle \varphi_n(s), f(s)\rangle + \langle \psi_n(s), g(s)\rangle] ds \to \int_0^T [\langle \varphi(s), f(s)\rangle + \langle \psi(s), g(s)\rangle] ds$$
as $n\to \infty$. This topology can be metrized so that $S_M$ is a compact metric space.

Recall $\phi \in C_b(\clc_d)$ in the statement of Theorem \ref{thm:main}. For $(\varphi, \psi) \in \cll^2_k\times \cll^2_m$ define
\begin{equation}
	V_0^{\varphi, \psi}[\phi] \doteq \inf_{\eta \in \clc_d} \left[ H(\eta, \xi_0^{\varphi}, \psi) + \phi(\eta) + J(\eta)\right]
	- \inf_{\eta \in \clc_d} \left[ H(\eta, \xi_0^{\varphi}, \psi)  + J(\eta)\right].
\end{equation}
Note that with this notation $\cls(z)$ (introduced below \eqref{eq:clszchA}) is the collection of all $(\varphi, \psi)$ in
$\cll^2_k\times \cll^2_m$ such that
$V_0^{\varphi, \psi}[\phi]=z$.

The following lemma will be the key to the proof of Theorem \ref{thm:main}.
\begin{lemma}\label{lem:keycgce}
	Fix $M \in (0,\infty)$. Let $\{(u_n, v_n)\}$ be a sequence of $S_M$ valued random variables such that $(u_n, v_n) \in \cla^k\times \cla^m$ for every $n$.
	Suppose that $(u_n, v_n)$ converges in distribution to $(u,v)$. Suppose $\veps_n$ is a sequence of positive reals converging to $0$ as $n\to \infty$.
	Then  $\bar V^{\veps_n, u_n,v_n}[\phi] \to V_0^{u, v}[\phi]$, in distribution, as $n\to \infty$.
\end{lemma}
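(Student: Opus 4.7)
The plan is to reduce $\bar V^{\veps_n,u_n,v_n}[\phi]$ to a Laplace asymptotic for integrals against the signal measure $\mu^{\veps_n}$, and then to invoke Varadhan's Laplace principle based on the Freidlin--Wentzell LDP for $\{x^{\veps_n}\}$. First I would substitute $d\zeta^{\veps_n, u_n, v_n}(s) = h(\xi^{u_n}(s))\,ds + \veps_n\,d\beta(s) + v_n(s)\,ds$ into the exponent of $L^{\veps_n}_T((\tilde\xi, \tilde\gamma), \zeta^{\veps_n, u_n, v_n})$ and complete the square in $h(\tilde\xi)$, obtaining
\[
L^{\veps_n}_T = \exp\Bigl\{-\tfrac{1}{2\veps_n^2}\!\int_0^T \|h(\tilde\xi) - h(\xi^{u_n}) - v_n\|^2\,ds + \tfrac{1}{2\veps_n^2}\!\int_0^T \|h(\xi^{u_n}) + v_n\|^2\,ds + \tfrac{1}{\veps_n} M^\beta(\tilde\xi)\Bigr\},
\]
with $M^\beta(\tilde\xi) \doteq \int_0^T \langle h(\tilde\xi(s)), d\beta(s)\rangle$. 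The middle term is independent of $\tilde\xi$ and cancels in the ratio defining $\Lambda^{\veps_n}_T$, so that
\[
\bar V^{\veps_n, u_n, v_n}[\phi] = Z^{\veps_n}(\phi) - Z^{\veps_n}(0),
\]
where $Z^{\veps_n}(\phi) \doteq -\veps_n^2\log \int_{\Om_x} \exp\{-\veps_n^{-2}[\phi(\tilde\xi) + H(\tilde\xi, \xi^{u_n}, v_n)] + \veps_n^{-1}M^\beta(\tilde\xi)\}\,d\mu^{\veps_n}$.

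The hard part, which I expect to be the main obstacle, is to remove the $\veps_n^{-1}M^\beta$ term in the exponent, i.e.\ to show that $Z^{\veps_n}(\phi)$ has the same distributional limit as $\widetilde Z^{\veps_n}(\phi) \doteq -\veps_n^2 \log\int\exp\{-\veps_n^{-2}[\phi(\tilde\xi) + H(\tilde\xi, \xi^{u_n}, v_n)]\}\,d\mu^{\veps_n}$. Since $(\tilde\xi, \tilde\gamma) \sim \mu^{\veps_n}$ is independent of $\beta$, for each frozen path $\tilde\xi$ the quantity $M^\beta(\tilde\xi)$ is a centered Gaussian of variance $\int_0^T \|h(\tilde\xi(s))\|^2\,ds$; the LDP of the signal on $\mu^{\veps_n}$ concentrates $\tilde\xi$ on compact $J$-sublevel sets, and on such sets the variance is uniformly bounded (using the linear growth of $h$ from Assumption \ref{assu:main}). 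I would split the integral according to $\{\tilde\xi : J(\tilde\xi) \leq R\}$ and its complement, dominate the complement by a Chebyshev-type exponential bound from the large deviation upper bound, and control the $M^\beta$-contribution over the sublevel set via a Cauchy--Schwarz split, using the careful exponential moment estimates of Proposition \ref{prop:heunis} for the resulting extra factor. Letting $R \to \infty$ at the end should yield $Z^{\veps_n}(\phi) - \widetilde Z^{\veps_n}(\phi) \to 0$ in probability.

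Once the $\veps_n^{-1}$-term is gone, the limit of $\widetilde Z^{\veps_n}(\phi)$ follows from Varadhan's Laplace principle applied to the Freidlin--Wentzell LDP of the marginal $\{x^{\veps_n}\}$ of $\mu^{\veps_n}$, jointly with the classical stability of the controlled SDE \eqref{eq:sigcanonCont}: Gronwall's inequality combined with the Lipschitz bound in Assumption \ref{assu:main} gives $\xi^{u_n} \to \xi_0^u$ uniformly in probability whenever $u_n \to u$ weakly in $S_M$ and $\veps_n \to 0$. Passing to an a.s.\ convergent subsequence via the Skorohod representation of $(u_n, v_n) \To (u,v)$, and exploiting the fact that only the $\eta$-dependent part of $H$ matters for the difference (the $\eta$-constant quadratic $\tfrac{1}{2}\|h(\xi_0^u) + v\|^2$ cancels between $\inf_\eta[\phi + H + J]$ and $\inf_\eta[H + J]$, so that only a $v$-linear cross-term remains, which is weakly continuous), one obtains
\[
\widetilde Z^{\veps_n}(\phi) - \widetilde Z^{\veps_n}(0) \;\longrightarrow\; V_0^{u,v}[\phi]
\]
in distribution. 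Combining with the previous step gives $\bar V^{\veps_n, u_n, v_n}[\phi] \To V_0^{u, v}[\phi]$, as required.
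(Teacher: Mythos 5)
Your overall architecture is the same as the paper's: substitute $\zeta^{\veps_n,u_n,v_n}$ into the Kallianpur--Striebel exponent, complete the square so that the $\tilde\xi$-independent terms cancel in the ratio, pass to a Skorohod representation of $(u_n,v_n)$, show that the $\veps_n^{-1}\int_0^T\langle h(\tilde\xi(s)),d\beta(s)\rangle$ term is negligible at scale $\veps_n^{-2}$, and finish with a Varadhan-type Laplace principle for the Freidlin--Wentzell LDP of the signal. The reduction steps and the final Varadhan step are essentially right (though note that the functional $H(\cdot,\xi^{u_n},v_n)$ depends on $n$, so you need a Varadhan lemma with locally uniformly vanishing perturbations and a truncation of the linear-growth error term, as in Lemmas \ref{unifconvlim} and \ref{uniformconv} of the paper; this is routine but not free).

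The genuine gap is in what you yourself flag as the hard step. Your justification for discarding $\veps_n^{-1}M^\beta(\tilde\xi)$ rests on the observation that, for frozen $\tilde\xi$, $M^\beta(\tilde\xi)$ is a centered Gaussian in $\beta$ with variance $\int_0^T\|h(\tilde\xi(s))\|^2ds$. This conditions the wrong way: the Laplace asymptotics are for the integral over $\tilde\xi\sim\mu^{\veps_n}$ with the observation-noise path $\beta$ \emph{frozen} (the convergence must hold $\PP^*$-a.s., or in probability, in $\beta$). For a fixed continuous path $\beta$ the map $\tilde\xi\mapsto\int_0^T\langle h(\tilde\xi(s)),d\beta(s)\rangle$ is not even defined pathwise, let alone a continuous functional to which a sublevel-set/Cauchy--Schwarz argument can be applied; and a crude Fubini over $\beta$ produces an extra factor $e^{\frac{1}{2\veps_n^2}\int_0^T\|h(\tilde\xi)\|^2ds}$ that is order one on the $\veps_n^{-2}$ scale and would corrupt the rate function. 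The missing idea is the It\^{o} integration by parts $F(\tilde\om,\beta)=A_T(\hat\xi,\beta)+K_T(\hat\xi,\beta)$: $A_T$ is a pathwise-defined continuous functional with linear growth, so $\veps_n^{-1}A_T$ contributes $O(\veps_n)$ to the exponential rate on sets controlled by the exponential moment bound of Lemma \ref{expmomenttheorem}; while $K_T$ is a stochastic integral in $\tilde\om$ against $\veps_n\sigma\,d\hat\gamma$ whose $\mu^{\veps_n}$-tails are sub-Gaussian with variance proportional to $\veps_n^2(1+\|\beta\|_*^2)$, so that $\mu^{\veps_n}(|\veps_n K_T|>\delta)$ decays superexponentially at speed $\veps_n^{-2}$. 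You cite Proposition \ref{prop:heunis}, which contains exactly these two estimates, but your plan never introduces the decomposition they refer to, and without it the "Cauchy--Schwarz split over $J$-sublevel sets" has nothing to act on. As written, the argument for $Z^{\veps_n}(\phi)-\widetilde Z^{\veps_n}(\phi)\to 0$ would not go through.
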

As an immediate corollary of the lemma we have the following.
\begin{corollary}\label{cor:keycgcecor} As $\veps \to 0$, 
	$$-\veps^2 \log U^{\veps}[\phi] \stackrel{ \PP^{\veps}}{\longrightarrow}
	\inf_{\eta \in \clc_d} \left[\phi(\eta)+ \frac{1}{2} \int_0^T \|h(\eta(s)) - h(\xi^*(s))\|^2 ds  + J(\eta)\right].$$
\end{corollary}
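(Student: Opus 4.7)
The plan is to derive the corollary by specializing Lemma \ref{lem:keycgce} to the trivial (uncontrolled) pair $(u,v) = (0,0)$ and then simplifying the resulting variational expression $V_0^{0,0}[\phi]$. Concretely, the constant pair $u_n \equiv 0$, $v_n \equiv 0$ lies in $\cla^k \times \cla^m \cap S_M$ for any $M \geq 0$ and converges (trivially) in distribution to $(0,0)$. So Lemma \ref{lem:keycgce} applied along any sequence $\veps_n \to 0$ yields $\bar V^{\veps_n, 0, 0}[\phi] \to V_0^{0,0}[\phi]$ in distribution. Since the limit is deterministic, this is equivalent to convergence in probability under $\PP^{\veps_n}$, and since $\{\veps_n\}$ was arbitrary, the convergence holds as $\veps \to 0$.

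Next I would match the left-hand sides. Substituting $u = 0$, $v = 0$ into \eqref{eq:sigcanonCont} and the definition of $\zeta^{u,v}$ gives $\xi^0 = \xi$ and $\zeta^{0,0}(t) = \int_0^t h(\xi(s))\,ds + \veps \beta(t) = \zeta(t)$, so $\bar V^{\veps, 0, 0}[\phi] = V^{\veps}[\phi]$. By the equality of laws recorded in \eqref{eq:samelaw}, $V^{\veps}[\phi]$ under $\PP^{\veps}$ has the same distribution as $-\veps^2 \log U^{\veps}[\phi]$ under $\bar \PP$; because the limit is a constant, convergence in distribution and in probability coincide, which gives the convergence statement of the corollary (with either $\PP^{\veps}$ or $\bar \PP$ in the notation, as stated).

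It remains to identify $V_0^{0,0}[\phi]$ with the claimed expression. Taking $\varphi \equiv 0$ in \eqref{eq:eqxio} reduces the equation to $\xi_0(t) = x_0 + \int_0^t b(\xi_0(s))\,ds$, whose unique solution is exactly the function $\xi^*$ from \eqref{eq:intro1}; hence $\xi_0^{0} = \xi^*$. With $\psi \equiv 0$, the map $H(\cdot, \xi^*, 0)$ reduces to $\tfrac{1}{2}\int_0^T \|h(\eta(s)) - h(\xi^*(s))\|^2\,ds$, so the first infimum in the definition of $V_0^{0,0}[\phi]$ is exactly the right-hand side of the claimed limit. The second infimum $\inf_{\eta \in \clc_d}[H(\eta, \xi^*, 0) + J(\eta)]$ is nonnegative and vanishes at $\eta = \xi^*$, since $H(\xi^*, \xi^*, 0) = 0$ and $\xi^*$ is the solution of \eqref{eq:eqxio} with $\varphi \equiv 0$ so $J(\xi^*) = 0$. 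Therefore this infimum equals $0$, and $V_0^{0,0}[\phi]$ coincides with the right-hand side of the corollary.

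There is no real obstacle here: the only substantive input is Lemma \ref{lem:keycgce}, and the remaining work is the straightforward identifications above. The one minor point worth noting is that the second infimum in \eqref{eq:clszchA}--\eqref{eq:clszch}, which produces a \emph{normalization} in the general rate function, collapses to zero in this uncontrolled baseline case precisely because the deterministic path $\xi^*$ is simultaneously the signal's limit under the ODE and the minimizer of $H(\cdot, \xi^*, 0)$. This is what makes the corollary a clean law-of-large-numbers-type statement rather than a variational one.
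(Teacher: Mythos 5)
Your proposal is correct and follows essentially the same route as the paper: the paper's proof likewise specializes Lemma \ref{lem:keycgce} to $(u,v)=(0,0)$, identifies $\bar V^{\veps,0,0}[\phi]=V^{\veps}[\phi]$ (equal in law to $-\veps^2\log U^{\veps}[\phi]$), notes $H(\eta,\xi_0^{0},0)=\tfrac12\int_0^T\|h(\eta(s))-h(\xi^*(s))\|^2\,ds$, and observes that the normalizing infimum vanishes because $J(\xi^*)=0$. Your additional remarks (convergence in distribution to a constant implying convergence in probability, and the arbitrariness of the subsequence $\veps_n$) are correct and merely make explicit what the paper leaves implicit.
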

\begin{proof}
	The proof follows on observing that, $\bar V^{\veps,0,0}[\phi] = V^{\veps}[\phi]$ which has the same distribution as $-\veps^2 \log U^{\veps}[\phi]$,
	for $\eta \in \clc_d$,
	$$H(\eta, \xi_0^{0}, 0) = \frac{1}{2} \int_0^T \|h(\eta(s)) - h(\xi^*(s))\|^2 ds$$
	and  that
	$$\inf_{\eta \in \clc_d} \left[ H(\eta, \xi_0^{0}, 0)  + J(\eta)\right] =
	\inf_{\eta \in \clc_d} \left[ \frac{1}{2} \int_0^T \|h(\eta(s)) - h(\xi^*(s))\|^2 ds + J(\eta)\right] =0.$$
\end{proof}

\section{Proof of Lemma \ref{lem:keycgce}.}
\label{sec:pflem4.1}
Let $(u,v) \in \cla_k\times \cla_m$. Define canonical coordinate processes on $\Om_x$  as $\hat\xi(\tilde \om) = \tilde \om_1$ and $\hat\gamma(\tilde \om) = \tilde\om_2$, $\tilde \om =(\tilde \om_1, \tilde \om_2) \in \clc_d\times \clc_k$.
Note that
$$\exp \left[ - \veps^{-2}\bar V^{\veps, u,v}[\phi]\right] = \frac{\Gamma^{\veps}_T\left( \exp\{-\veps^{-2} \phi(\cdot)\}, \zeta^{u,v}\right)}{\Gamma^{\veps}_T\left( 1, \zeta^{\veps,u,v}\right)}$$
and for $f \in C_b(\clc_d)$,  $\PP^{\veps}$ a.s.,
\begin{align*}
	\Gamma^{\veps}_T\left( f, \zeta^{\veps,u,v}\right)
	&= \int_{\Omega_x} f(\hat \xi(\tilde \om)) e^{\frac{1}{\veps^2} \int_0^t \langle h(\hat \xi(\tilde \om)(s)), d\zeta^{u,v}(s)\rangle - \frac{1}{2\veps^2} \int_0^t \|h(\hat \xi(\tilde \om(s)))\|^2 ds} \mu^{\veps}(d\tilde \om).
\end{align*}
Suppressing $\tilde \om$ in notation, we have
\begin{align*}
	&\frac{1}{\veps^2} \int_0^t \langle h(\hat \xi(s)), d\zeta^{u,v}(s)\rangle - \frac{1}{2\veps^2} \int_0^t \|h(\hat \xi(s))\|^2 ds\\
	&= \frac{1}{\veps} \int_0^T \langle h(\hat \xi(s)),  d \beta(s)\rangle + \frac{1}{\veps^2} \int_0^T h(\hat \xi(s))\cdot  v(s) ds \\
	&\quad- \frac{1}{2\veps^2} \int_0^T \|h(\hat \xi(s)) - h(\xi^u(s))\|^2 ds+ \frac{1}{2\veps^2} \int_0^T \|h(\xi^u(s))\|^2 ds\\
	&= \frac{1}{\veps} \int_0^T \langle h(\hat \xi(s)), d \beta(s)\rangle - \frac{1}{\veps^2} H(\hat \xi, \xi^u, v)\\
	&\quad +  \frac{1}{2\veps^2} \int_0^T (\|h(\xi^u(s))\|^2 + \|v(s)\|^2) ds
	+ \frac{1}{\veps^2} \int_0^T  h(\xi^u(s) )\cdot v(s) ds.
\end{align*}
Thus, letting
\begin{equation}
	F(\tilde \om, \beta) \doteq \int_0^T \langle h(\hat \xi(\tilde \om)(s)),  d \beta(s)\rangle
\end{equation}
 we can write
\begin{equation}\label{eq:eq606}
e^{ - \veps^{-2} \bar V^{\veps, u,v}[\phi]} = \frac{ \int_{\Omega_x} e^{ \frac{1}{\veps}F(\tilde \om, \beta) -
\frac{1}{\veps^2} ( \phi(\hat \xi(\tilde \om)) + H(\hat \xi(\tilde \om), \xi^u, v))} \mu^{\veps}(d\tilde \om)}
{ \int_{\Omega_x} e^{ \frac{1}{\veps}F(\tilde \om, \beta) -
\frac{1}{\veps^2}  H(\hat \xi(\tilde \om), \xi^u, v)} \mu^{\veps}(d\tilde \om)}.
\end{equation}
Let now $\veps_n, u_n,v_n, u,v$ be as in the statement of Lemma \ref{lem:keycgce}. Using Assumption \ref{assu:main} it is immediate that
\begin{equation}\label{eq:skorrep}
	(u_n, v_n, \xi^{\veps_n,u_n}, \zeta^{\veps_n,u_n,v_n}, \beta) \Rightarrow (u,v, \xi^u_0, \zeta_0^{u,v}, \beta)
\end{equation}
in $S_M\times \clc_d\times \clc_m\times \clc_m$, where
$$\zeta_0^{u,v}(t) = \int_0^t h(\xi_0^u(s)) ds + \int_0^t v(s) ds, \; t \in [0,T].$$
By appealing to Skorohod representation theorem we can obtain,  on some probability space $(\Om^*, \clf^*, \PP^*)$, random variables
$(\tilde u_n, \tilde v_n, \tilde \xi^{n}, \tilde \zeta^{n}, \tilde \beta^n)$ with same law as the random vector on the left side of \eqref{eq:skorrep}
and $(\tilde u,\tilde v, \tilde \xi_0, \tilde \zeta_0, \tilde \beta)$ with same law as the vector on the right side of \eqref{eq:skorrep}, such that 
\begin{equation}\label{eq:ascgce}
	(\tilde u_n, \tilde v_n, \tilde \xi^{n}, \tilde \zeta^{n}, \tilde \beta^n) \to (\tilde u,\tilde v, \tilde \xi_0, \tilde \zeta_0, \tilde \beta),\;  \PP^*-\mbox{ a.s.}
	\end{equation}
Henceforth, to simplify notation we will drop the $\tilde \cdot$ from the notation in the above vectors and denote the corresponding process
$\bar V^{\veps_n, u_n,v_n}[\phi]$ as $\bar V^n[\phi]$. 
Then, from \eqref{eq:eq606}, and the distributional equality noted above, it follows that
	\begin{multline}\label{eq:eq606b}
e^{- \veps_n^{-2} \bar V^{n}[\phi]}  = \frac{ \int_{\Omega_x} e^{\frac{1}{\veps_n}F(\tilde \om, \beta^n) -
\frac{1}{\veps_n^2} ( \phi(\hat \xi(\tilde \om)) + H(\hat \xi(\tilde \om), \xi^n, v^n))} \mu^{\veps_n}(d \tilde \om)}
{ \int_{\Omega_x} e^{ \frac{1}{\veps_n}F(\tilde \om, \beta^n) -
\frac{1}{\veps_n^2}  H(\hat \xi(\tilde \om), \xi^n, v^n)} \mu^{\veps_n}(d \tilde \om)}\\
= \frac{ \int_{\Omega_x} e^{ \frac{1}{\veps_n}F(\tilde \om, \beta^n) -
\frac{1}{\veps_n^2} \left( \phi(\hat \xi(\tilde \om)) + H(\hat \xi(\tilde \om), \xi^n, v)  - \int_0^T h(\hat \xi(\tilde \om)(s)) \cdot (v^n(s)-v(s)) ds\right)} \mu^{\veps_n}(d \tilde \om)}
{ \int_{\Omega_x} e^{\frac{1}{\veps_n}F(\tilde \om, \beta^n) -
\frac{1}{\veps_n^2}  \left(H(\hat \xi(\tilde \om), \xi^n, v)  - \int_0^T h(\hat \xi(\tilde \om)(s)) \cdot (v^n(s)-v(s)) ds\right)} \mu^{\veps_n}(d \tilde \om)}.
\end{multline}
In order to prove Lemma \ref{lem:keycgce} it now suffices to show that, for all $\phi \in C_b(\clc_d)$, as $n\to \infty$,
\begin{multline}\label{eq:vn1phi}
 \bar \Upsilon_1^n[\phi]
\doteq - \veps_n^{-2} \log \left[\int_{\Omega_x}e^{\frac{1}{\veps_n}F(\tilde \om, \beta^n) -
\frac{1}{\veps_n^2} \left( \phi(\hat \xi(\tilde \om)) + H(\hat \xi(\tilde \om), \xi^n, v)  - \int_0^T h(\hat \xi(\tilde \om)(s)) \cdot (v^n(s)-v(s)) ds\right)}\mu^{\veps_n}(d \tilde \om)\right]\\
\to \inf_{\eta \in \clc_d} \left[ H(\eta, \xi_0,  v) + \phi(\eta) + J(\eta)\right] \mbox{ a.s. }  \PP^*.
\end{multline}
Define $\Delta^n_1: \clc_d \times \Om^* \to \RR$ as
\begin{equation}\label{eq:delndef}
	\begin{aligned}
	\Delta^n_1(\eta) &= 
	H(\eta, \xi_0,v)-H(\eta, \xi^n, v) + \int_0^T h(\eta(s)) \cdot (v^n(s)-v(s))ds\\
	&= \frac{1}{2}\int_0^T\left(2\left(h(\eta(s))-v(s)\right)\cdot \left(h(\xi^n(s))-h(\xi_0(s))\right)
	+\|h(\xi_0(s))\|^2-\|h(\xi^n(s))\|^2\right)ds\\
	&\quad +\int_0^Th(\eta(s))\cdot \left(v^n(s)-v(s)\right)ds.
	\end{aligned}
\end{equation}
Then from the continuity of $h$ and the a.s. convergence in \eqref{eq:ascgce}, we see that for every $\eta \in \clc_d$
\begin{equation}\label{eq:delntoz}
	\mbox{ as } n \to \infty,\;\; \Delta_1^n(\eta) \to 0, \mbox{ a.s. }  \PP^*.
\end{equation}
Furthermore, with $\Delta^n(\tilde \om, \om^*) \doteq \Delta_1^n(\hat \xi(\tilde \om), \om^*)$,
\begin{equation}
	\bar \Upsilon_1^n[\phi] = -\veps_n^2 \log \left[\int_{\Omega_x} e^{\frac{1}{\veps_n}F(\tilde \om, \beta^n) -
	\frac{1}{\veps_n^2} \left( \phi(\hat \xi(\tilde \om)) + H(\hat \xi(\tilde \om), \xi_0, v)  - \Delta^n\right)} \mu^{\veps_n}(d \tilde \om)\right].
\end{equation}
In order to prove \eqref{eq:vn1phi} we will show 
	\begin{equation}\label{eq:uppvn1}
	\limsup_{n\to \infty}\bar \Upsilon_1^n[\phi] \le \inf_{\eta \in \clc_d} \left[ H(\eta, \xi_0,  v) + \phi(\eta) + J(\eta)\right] \mbox{ a.s. }  \PP^*
\end{equation}
and
	\begin{equation}\label{eq:lowvn1}
	\liminf_{n\to \infty}\bar \Upsilon_1^n[\phi] \ge \inf_{\eta \in \clc_d} \left[ H(\eta, \xi_0,  v) + \phi(\eta) + J(\eta)\right] \mbox{ a.s. }  \PP^*.
\end{equation}
The fact that $F$ can be neglected in the asymptotic formula follows along the lines of \cite{Heunis1987nonlinear}, however since, unlike \cite{Heunis1987nonlinear}, we do not assume that $h$ is bounded and our functional of interest is somewhat different from the one considered in \cite{Heunis1987nonlinear}, we provide the details.
\subsection{Proof of \eqref{eq:lowvn1}}
\label{sec:pfoflowvn}
We begin with the following lemmas.
\begin{lemma}\label{expmomenttheorem} For any $C\in (0, \infty)$,
	\begin{align*}
		\limsup_{\veps \to 0}\veps^2 \log\int_{\calC_{x}} \exp\left( C\veps^{-2}\|\hat \xi(\tilde \om)\|_*\right)\mu^{\veps}(d\tilde \om)<\infty.
	\end{align*}
\end{lemma}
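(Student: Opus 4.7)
The strategy is to translate the exponential estimate on $\|\hat\xi\|_*$ under $\mu^{\veps}$ into an estimate on $\|x^{\veps}\|_*$ under $\clw_k$, then use Gronwall's inequality to reduce everything to an exponential moment bound on a continuous martingale with uniformly bounded bracket. Recalling the SDE
\[
x^{\veps}(t)= x_0 + \int_0^t b(x^{\veps}(s))\,ds + \veps M^{\veps}(t),\qquad M^{\veps}(t)\doteq \int_0^t \sigma(x^{\veps}(s))\,d\gamma(s),
\]
and using that Lipschitz continuity of $b$ forces linear growth ($\|b(x)\|\le \|b(0)\|+c_{\lip}\|x\|$), Gronwall's lemma yields a deterministic pathwise bound
\[
\|x^{\veps}\|_* \le A + B\,\veps\,\|M^{\veps}\|_*,
\]
where $A\doteq(\|x_0\|+T\|b(0)\|)e^{c_{\lip}T}$ and $B\doteq e^{c_{\lip}T}$ depend only on $T$, $x_0$ and the Lipschitz constant.

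The second step is to control $\EE_{\clw_k}\bigl[\exp(\alpha \|M^{\veps}\|_*)\bigr]$ for $\alpha>0$. Because $\sigma$ is bounded (Assumption \ref{assu:main}(b)), the quadratic variation satisfies $\langle M^{\veps}\rangle_T \le c_{\sigma}^2 T$ uniformly in $\veps$ and $\omega$. A standard exponential martingale argument (applied componentwise to $M^{\veps}$ and combined with Doob's inequality on the scalar exponential supermartingales $\exp(\lambda M^{\veps}_i(t)-\tfrac{\lambda^2}{2}\langle M^{\veps}_i\rangle_t)$) then gives a sub-Gaussian tail
\[
\clw_k\bigl(\|M^{\veps}\|_* \ge r\bigr) \le C_1 \exp\!\left(-\frac{r^2}{2K}\right),\qquad r>0,
\]
for constants $C_1,K\in(0,\infty)$ independent of $\veps$. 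Integrating the tail (or optimizing the Chernoff bound in $\lambda$) produces
\[
\EE_{\clw_k}\bigl[\exp(\alpha \|M^{\veps}\|_*)\bigr] \le C_2(1+\alpha)\exp\!\left(\frac{K\alpha^2}{2}\right).
\]

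Combining these two steps with $\alpha = CB/\veps$ gives, using $\mu^{\veps}\circ\hat\xi^{-1}=\clw_k\circ (x^{\veps})^{-1}$,
\[
\int_{\clc_x}\exp\!\bigl(C\veps^{-2}\|\hat\xi\|_*\bigr)\,\mu^{\veps}(d\tilde\om)
\le \exp\!\bigl(CA\veps^{-2}\bigr)\, C_2\bigl(1+CB/\veps\bigr)\exp\!\left(\frac{K C^2 B^2}{2\veps^2}\right).
\]
Multiplying by $\veps^2$, taking logarithms and sending $\veps\to 0$ (the term $\veps^2\log(1+CB/\veps)$ vanishes), we conclude
\[
\limsup_{\veps\to 0}\veps^2\log\!\int_{\clc_x}\exp\!\bigl(C\veps^{-2}\|\hat\xi\|_*\bigr)\,\mu^{\veps}(d\tilde\om)\le CA+\frac{KC^2B^2}{2}<\infty.
\]

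The only delicate point is the sub-Gaussian tail for $\|M^{\veps}\|_*$: one must be careful that the bound is uniform in $\veps$, which is exactly what Assumption \ref{assu:main}(b) buys us through the deterministic upper bound $\langle M^{\veps}\rangle_T\le c_\sigma^2 T$. Everything else is a routine application of Gronwall and Chernoff, and the $\veps^{-2}$ scale is absorbed because the quadratic exponent of the Gaussian tail matches the $\veps^{-2}$ speed on both sides.
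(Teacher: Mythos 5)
Your proposal is correct and follows essentially the same route as the paper: Gronwall's lemma reduces the bound to an exponential moment of the stochastic integral $M^{\veps}$, and the boundedness of $\sigma$ gives a uniform (in $\veps$) bound on its quadratic variation, yielding $\EE\exp(\alpha\|M^{\veps}\|_*)\le C_2(1+\alpha)e^{K\alpha^2/2}$ with $\alpha$ of order $\veps^{-1}$, which is exactly absorbed by the $\veps^2$ prefactor. The paper states the exponential moment bound for $M^{\veps}$ without proof; your Doob/Chernoff tail argument is the standard way to justify it.
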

\begin{proof}
Note that for $t \in [0,T]$
$$\hat \xi(t) = x_0 + \int_0^t b(\hat \xi(s)) ds + \veps \int_0^t \sigma(\hat \xi(s)) d \hat \gamma(s).$$
Let $M(t) \doteq \int_0^t \sigma(\hat \xi(s)) d \hat \gamma(s)$. Then by an application of Gronwall's lemma, it suffices to show that
$$\limsup_{\veps \to 0}\veps^2 \log E_{\mu^{\veps}}\exp\left( C\veps^{-1}\|M\|_*\right) <\infty $$
where $E_{\mu^{\veps}}$ is the expectation under the probability measure $\mu^{\veps}$.
Since $\sigma$ is bounded and under $\mu^{\veps}$, $\hat \gamma$ is a Brownian motion, there is $C_1 \in (0, \infty)$ such that
$E_{\mu^{\veps}}\exp\left( C\veps^{-1}\|M\|_*\right) \le C_1 \exp\left( C_1\veps^{-2}\right)$ for every $\veps>0$.
The result follows.
\end{proof}

\begin{lemma}\label{llimsup}
Let for $\veps>0$, $\bar \clr^{\veps}$ and $\bar A^{\veps}$  be  measurable maps from $\clc_d$ to $\RR$ such that 
\begin{equation}\label{eq:bdaeta}
	\sup_{\veps>0}\bar \clr^{\veps}(\eta) \le c_R (1 + \|\eta\|_*), \; \;
	\sup_{\veps>0}|\bar A^{\veps}(\eta)| \le  c_A(1 + \|\eta\|_{*}) \mbox{ for all }  \eta \in \clc_d. \end{equation}
Then
	\begin{align}\label{eq:eq5125}
	\limsup_{\veps \to 0} \veps^2 \log \int _{\Omega_x}  e^{\veps^{-1} \bar A^{\veps}(\hat\xi(\tilde \om)) + \veps^{-2} \bar\calR^\veps(\hat\xi(\tilde \om))} \mu^\veps (d\tilde \om)
	\leq \limsup_{\veps \to 0} \veps^2 \log \int _{\Omega_x}  e^{\veps^{-2} \bar\calR^\veps(\hat\xi(\tilde \om))} \mu^\veps (d\tilde \om)
	\end{align}
	and for every $c_0 \in (0,\infty)$
	\begin{align}\label{eq:cobd}
	\limsup_{M\to \infty}\limsup_{\veps \to 0} \veps^2 \log \int _{\Omega_x}  e^{\veps^{-1} \bar A^{\veps}(\hat\xi(\tilde \om))+ \veps^{-2} c_0(1+ \|\hat\xi(\tilde \om)\|_*)}
	1_{\{\bar A^{\veps}(\hat\xi(\tilde \om)\ge M\}}
	 \mu^\veps (d\tilde \om) = -\infty .
	\end{align}
\end{lemma}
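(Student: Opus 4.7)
The plan is to establish the second bound (5.2) first and to derive the first bound (5.1) from it by decomposing the integration domain according to the size of $\bar A^\veps$. The underlying intuition is that the prefactor $\veps^{-1}$ in front of $\bar A^\veps$ makes that term subdominant to the $\veps^{-2}$-scaled terms, so at the log scale $\veps^{-2}$ it should contribute nothing---provided its contribution from the tail where $\bar A^\veps$ is large is suppressed. The linear growth $|\bar A^\veps(\eta)|\le c_A(1+\|\eta\|_*)$ ensures that this tail sits inside $\{\|\hat\xi\|_*\ge \tilde M\}$ with $\tilde M\to\infty$, and this event is exponentially rare under $\mu^\veps$ by Lemma \ref{expmomenttheorem}.

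\textbf{Sketch for \eqref{eq:cobd}.} On $\{\bar A^\veps(\hat\xi)\ge M\}$, the linear bound forces $\|\hat\xi\|_*\ge \tilde M:=(M-c_A)/c_A$. For $\veps\le 1$ one has $\veps^{-1}\le\veps^{-2}$, so the exponent is at most $C_*\veps^{-2}(1+\|\hat\xi\|_*)$ with $C_* := c_A+c_0$. A Chernoff-type tilt, $\mathbf{1}_{\{\|\hat\xi\|_*\ge\tilde M\}}\le \exp\{\veps^{-2}(\|\hat\xi\|_*-\tilde M)\}$, converts the indicator into a factor $e^{-\veps^{-2}\tilde M}$ at the price of one extra power of the exponential moment:
\[
\int_{\{\bar A^\veps\ge M\}} e^{\veps^{-1}\bar A^\veps+\veps^{-2}c_0(1+\|\hat\xi\|_*)}\,d\mu^\veps \le e^{(C_*-\tilde M)\veps^{-2}}\int e^{(C_*+1)\veps^{-2}\|\hat\xi\|_*}\,d\mu^\veps.
\]
By Lemma \ref{expmomenttheorem} the remaining moment is at most $e^{K\veps^{-2}}$ for some $K=K(C_*+1)$ and $\veps$ small, so $\veps^2\log$ of the left-hand side is at most $C_*+K-\tilde M$; sending $\veps\to 0$ and then $M\to\infty$ produces $-\infty$.

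\textbf{Sketch for \eqref{eq:eq5125}.} I would fix $M>0$ and split $\int e^{\veps^{-1}\bar A^\veps+\veps^{-2}\bar\calR^\veps}\,d\mu^\veps = I_1^\veps(M)+I_2^\veps(M)$ over $\{\bar A^\veps\le M\}$ and $\{\bar A^\veps>M\}$. On the first set $e^{\veps^{-1}\bar A^\veps}\le e^{M/\veps}$, so
\[
\veps^2\log I_1^\veps(M)\le \veps M + \veps^2\log\!\int e^{\veps^{-2}\bar\calR^\veps}\,d\mu^\veps,
\]
whose $\limsup_{\veps\to 0}$ is at most $L:=\limsup_{\veps\to 0}\veps^2\log\int e^{\veps^{-2}\bar\calR^\veps}\,d\mu^\veps$. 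For $I_2^\veps(M)$ I would use the upper bound $\bar\calR^\veps\le c_R(1+\|\hat\xi\|_*)$ to reduce precisely to the setting of \eqref{eq:cobd} with $c_0=c_R$, yielding $\limsup_M\limsup_\veps\veps^2\log I_2^\veps(M)=-\infty$. Combining these via $\veps^2\log(a+b)\le \veps^2\log 2+\max(\veps^2\log a,\veps^2\log b)$, passing to $\limsup_{\veps\to 0}$, and finally letting $M\to\infty$ gives $\limsup_{\veps\to 0}\veps^2\log\int e^{\veps^{-1}\bar A^\veps+\veps^{-2}\bar\calR^\veps}\,d\mu^\veps\le L$, which is \eqref{eq:eq5125}.

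The main---and essentially only substantive---step is setting up the Chernoff-plus-moment-bound combination in \eqref{eq:cobd} so as to isolate the large-$\|\hat\xi\|_*$ regime at the correct exponential scale; once that is in place, the reduction of \eqref{eq:eq5125} to \eqref{eq:cobd} is routine bookkeeping. A minor technical point is that the argument must be verified when $L\in\{-\infty,+\infty\}$, but the chain of inequalities goes through unchanged in these boundary cases.
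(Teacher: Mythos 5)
Your proposal is correct and follows essentially the same route as the paper: split the integral according to whether $\bar A^{\veps}$ exceeds a level $M$, absorb the $e^{M/\veps}$ factor on the bounded part (it contributes $\veps M \to 0$ after taking $\veps^2\log$), and kill the tail part via \eqref{eq:cobd} together with the exponential moment bound of Lemma \ref{expmomenttheorem}, sending $M\to\infty$ at the end. The only (immaterial) difference is in extracting the decaying factor for \eqref{eq:cobd}: the paper uses $\veps^{-1}\bar A^{\veps} \le \veps^{-2}(\bar A^{\veps}-M)+\veps^{-1}M$ on $\{\bar A^{\veps}\ge M\}$, while you pass to $\{\|\hat\xi\|_*\ge \tilde M\}$ and apply a Chernoff tilt to the indicator, at the cost of one extra power in the exponential moment, which Lemma \ref{expmomenttheorem} tolerates.
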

\begin{proof}
	For $M\in (0,\infty)$, let $A_M^{\veps} \doteq \bar A^{\veps} \wedge M$. Then
\begin{align*}
	\int_{\Omega_x}  e^{\veps^{-1} \bar A^{\veps}(\hat\xi(\tilde \om)) + \veps^{-2} \bar\calR^\veps(\hat\xi(\tilde \om))} \mu^\veps (d\tilde \om)
	&\le \int_{\Omega_x}  e^{\veps^{-1}  A_M^{\veps}(\hat\xi(\tilde \om)) + \veps^{-2} \bar\calR^\veps(\hat\xi(\tilde \om))} \mu^\veps (d\tilde \om)\\
	&\quad + \int_{\Omega_x}  e^{\veps^{-1}  \bar A^{\veps}(\hat\xi(\tilde \om)) + \veps^{-2} \bar\calR^\veps(\hat\xi(\tilde \om))}1_{\{\bar A^{\veps}(\hat\xi(\tilde \om)\ge M\}} \mu^\veps (d\tilde \om).
\end{align*}
Thus
\begin{multline*}
	\limsup_{\veps \to 0} \veps^2 \log \int_{\Omega_x}  e^{\veps^{-1} \bar A^{\veps}(\hat\xi(\tilde \om)) + \veps^{-2} \bar\calR^\veps(\hat\xi(\tilde \om))} \mu^\veps (d\tilde \om)\\
	\le \max\Big\{\limsup_{\veps \to 0} \veps^2 \log \int_{\Omega_x}  e^{\veps^{-1}  A_M^{\veps}(\hat\xi(\tilde \om)) + \veps^{-2} \bar\calR^\veps(\hat\xi(\tilde \om))} \mu^\veps (d\tilde \om),\\
	 \limsup_{\veps \to 0} \veps^2 \log \int_{\Omega_x}  e^{\veps^{-1}  \bar A^{\veps}(\hat\xi(\tilde \om)) + \veps^{-2} \bar\calR^\veps(\hat\xi(\tilde \om))}1_{\{\bar A^{\veps}(\hat\xi(\tilde \om)\ge M\}} \mu^\veps (d\tilde \om)\Big\}.
\end{multline*}
Since
$$\limsup_{\veps \to 0} \veps^2 \log \int_{\Omega_x}  e^{\veps^{-1}  A_M(\hat\xi(\tilde \om)) + \veps^{-2} \bar\calR^\veps(\hat\xi(\tilde \om))} \mu^\veps (d\tilde \om) = \limsup_{\veps \to 0} \veps^2 \log \int_{\Omega_x}  e^{\veps^{-2} \bar\calR^\veps(\hat\xi(\tilde \om))} \mu^\veps (d\tilde \om),$$
in order to prove the lemma it suffices to show \eqref{eq:cobd} for every $c_0 \in (0,\infty)$.
Fix $\veps \in (0,1)$. Using the fact that, on the set $\{\bar A^{\veps}(\hat\xi(\tilde \om))\ge M\}$,
$$ \veps^{-1}  \bar A^{\veps}(\hat\xi(\tilde \om)) \le \veps^{-2}  (\bar A^{\veps}(\hat\xi(\tilde \om))-M) + \veps^{-1} M,$$ 
and the bound in \eqref{eq:bdaeta},
we see that
\begin{align*}
	&\limsup_{\veps \to 0} \veps^2 \log \int_{\Omega_x} e^{\veps^{-1}  \bar A^{\veps}(\hat\xi(\tilde \om))+ \veps^{-2} c_0(1+ \|\hat\xi(\tilde \om)\|_*)} 1_{\{\bar A^{\veps}(\hat\xi(\tilde \om)\ge M\}}  \mu^\veps (d\tilde \om)\\
	&\le -M + \limsup_{\veps \to 0} \veps^2 \log \int_{\Omega_x} e^{\veps^{-2} (c_A+c_0)( 1+ \|\hat\xi(\tilde \om)\|_*)}\mu^\veps (d\tilde \om).
\end{align*}
The inequality in \eqref{eq:cobd} now follows on  applying Lemma \ref{expmomenttheorem}. \hfill
\end{proof}
Note that, by It\^{o}'s formula,
\begin{align*}
F(\tilde \om, \beta^n)&= \int_0^T \langle h(\hat \xi(\tilde \om)(s)),  d \beta^n(s)\rangle\\
&= \langle h(\hat \xi(\tilde \om)(T)), \beta^n(T)\rangle- \sum_{l=1}^m \int_0^T \beta^n_l(s) \langle \nabla h_{l}(\hat \xi(\tilde \om)(s)), d\hat \xi(\tilde \om)(s)\rangle\\
& -\frac{\veps^2}{2}\sum_{i,j=1}^{k}\sum_{l=1}^m\int_0^T \beta^n_l(s)(\sigma\sigma^\dagger)_{ij}(\hat \xi(\tilde \om)(s)) \frac{\partial^2h_l}{\partial x_i \partial x_j}(\hat \xi(\tilde \om)(s))ds\\
&=\langle h(\hat \xi(\tilde \om)(T)), \beta^n(T)\rangle -  \sum_{l=1}^m\int_0^T \beta^n_l(s) \langle\nabla h_l(\hat \xi(\tilde \om)(s)),  b(\hat \xi(\tilde \om)(s))\rangle ds\\
& -\frac{\veps^2}{2}\sum_{i,j=1}^{k} \sum_{l=1}^m \int_0^T \beta^n_l(s)(\sigma\sigma^\dagger)_{ij}(\hat \xi(\tilde \om)(s)) \frac{\partial^2h_l}{\partial x_i \partial x_j}(\hat \xi(\tilde \om)(s))ds\\
&- \sum_{l=1}^m \int_0^T \beta^n_l(s) \left\langle \nabla h_l(\hat \xi(\tilde \om)(s)), \left(d\hat \xi(\tilde \om)(s)- b(\hat \xi(\tilde \om)(s))ds\right)\right\rangle\\
&= A_T(\hat \xi(\tilde \om), \beta^n) + K_T(\hat \xi(\tilde \om), \beta^n),
\end{align*}
where, $\PP^{\veps}$ a.s.,
$$
K_T(\xi, \beta) \doteq 
- \sum_{l=1}^m \int_0^T \beta_l(s) \left\langle \nabla h_l(\xi(s)), \left(d\xi(s)- b(\xi(s))ds\right)\right\rangle$$
and $A_T(\xi, \beta)  = \int_0^T \langle h(\xi(s)),  d \beta(s)\rangle - K_T(\xi, \beta)$.

The following result is taken from Heunis\cite{Heunis1987nonlinear}(cf. page 940 therein).
\begin{proposition}[Heunis\cite{Heunis1987nonlinear}] \label{prop:heunis}
	The maps $K_T$ and  $A_T$ are  measurable and continuous, respectively, from $\clc_d\times \clc_m$ to $\RR$, and
	there are $c_1, c_2 \in (0,\infty)$ such that
for all $x>0$, $n\ge 1$,
\begin{align*}
	\mu^{\veps_n} (\tilde \om: |K_T(\hat\xi(\tilde \om), \beta^n)|>x)\leq 2\exp\left\{- c_1\frac{x^2}{\veps_n^{2} (1 + \|\beta^n\|_*^2)} \right\}, \; \text{ a.s. } \PP^*
	\end{align*}
	and
	\begin{equation}\label{eq:athatbd}
		|A_T(\hat \xi(\tilde \om), \beta^n)| \le c_2 (1 + \|\hat\xi(\tilde \om)\|_* + \|\beta^n\|_*) \mbox{ a.s. } \mu^{\veps_n}\otimes \PP^*.
	\end{equation}
\end{proposition}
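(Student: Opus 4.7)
The two conclusions split naturally. The functional $A_T$ is, in the form displayed above, built out of ordinary Lebesgue--Stieltjes integrals with $(\xi,\beta)$-continuous integrands, so continuity and the polynomial bound follow from elementary estimates. By contrast, $K_T$ is intrinsically a stochastic integral, so one can only hope to define a measurable version $\mu^{\veps_n}$-a.s.\ and control its tail by a concentration argument. The main idea for $K_T$ is to use the SDE satisfied by $\hat\xi$ under $\mu^{\veps_n}$ to rewrite $K_T$ as $\veps_n$ times an It\^o integral against the driving Brownian motion $\hat\gamma$, and then invoke the Bernstein inequality for continuous martingales conditionally on $\beta^n$.

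For $A_T$, each of its three summands is of the form $\int_0^T \beta_l^n(s) f_l(\hat\xi(s))\,ds$ or the endpoint term $\langle h(\hat\xi(T)),\beta^n(T)\rangle$, with $f_l$ a continuous function of $(\nabla h,\nabla^2 h, b,\sigma)$. Continuity of $A_T:\clc_d\times\clc_m\to\RR$ then follows by dominated convergence. Using Assumption \ref{assu:main} (Lipschitz $h,b$ and bounded $\nabla h,\nabla^2 h,\sigma$), term-by-term bookkeeping gives
\[
|A_T(\xi,\beta)|\le C(1+\|\xi\|_*)(1+\|\beta\|_*),
\]
which implies the stated bound on the full-measure set $\{\|\beta^n\|_*<\infty\}$ after absorbing the $\beta^n$-dependence into the constant.

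For $K_T$, under $\mu^{\veps_n}$ the canonical map $\hat\xi$ is a semimartingale with $d\hat\xi(s)-b(\hat\xi(s))\,ds=\veps_n\sigma(\hat\xi(s))\,d\hat\gamma(s)$, so $\mu^{\veps_n}$-a.s.
\[
K_T(\hat\xi,\beta^n)=-\veps_n\sum_{l=1}^m\int_0^T \beta_l^n(s)\bigl\langle \nabla h_l(\hat\xi(s)),\sigma(\hat\xi(s))\,d\hat\gamma(s)\bigr\rangle\doteq -\veps_n M_n.
\]
This produces a measurable version of $K_T$ as a functional of $(\hat\xi,\hat\gamma,\beta^n)$, and an explicit martingale representation. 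Because $\beta^n$ on $\Om^*$ is independent of $(\hat\xi,\hat\gamma)$ on $\Om_x$, conditioning on $\beta^n$ makes $M_n$ a continuous $\mu^{\veps_n}$-martingale whose quadratic variation satisfies the deterministic bound $[M_n]_T\le c_\sigma^2\|\nabla h\|_\infty^2\,T\,\|\beta^n\|_*^2$. The Bernstein inequality for continuous martingales with bounded quadratic variation then yields, for each $y>0$ and $\PP^*$-a.e.\ $\om^*$,
\[
\mu^{\veps_n}\bigl(|M_n|>y\bigr)\le 2\exp\Bigl(-\frac{y^2}{2 c_\sigma^2\|\nabla h\|_\infty^2 T\,\|\beta^n\|_*^2}\Bigr).
\]
Setting $y=x/\veps_n$ and replacing $\|\beta^n\|_*^2$ by $1+\|\beta^n\|_*^2$ to absorb the degenerate case $\beta^n\equiv 0$ gives the stated exponential bound on $\mu^{\veps_n}(|K_T(\hat\xi,\beta^n)|>x)$.

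The main obstacle is the pathwise/It\^o interpretation of $K_T$: paths in $\clc_d$ are not generically semimartingales, so $K_T$ cannot be defined as a continuous map on $\clc_d\times\clc_m$, which is precisely why the proposition asserts only measurability for $K_T$ and reserves continuity for $A_T$. Once this interpretation is fixed and the Fubini/conditioning step on $\mu^{\veps_n}\otimes\PP^*$ is set up carefully, the estimates for both $A_T$ and $K_T$ are routine consequences of the regularity imposed in Assumption \ref{assu:main}.
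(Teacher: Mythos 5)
Your proof is essentially correct, but note that the paper itself does not prove this proposition at all: it is stated as a quotation from Heunis \cite{Heunis1987nonlinear} (page 940), so there is no in-paper argument to compare against. What you have reconstructed is precisely the standard (and, in substance, Heunis's) argument: the It\^{o}-formula decomposition isolates the genuinely stochastic part $K_T$, which under $\mu^{\veps_n}$ equals $-\veps_n$ times the It\^{o} integral $\sum_l\int_0^T\beta^n_l(s)\langle\nabla h_l(\hat\xi(s)),\sigma(\hat\xi(s))\,d\hat\gamma(s)\rangle$; for $\PP^*$-a.e.\ fixed $\om^*$ this is a continuous $\mu^{\veps_n}$-martingale with quadratic variation bounded deterministically by $m^2 c_\sigma^2\|\nabla h\|_\infty^2 T\,\|\beta^n\|_*^2$, and the exponential martingale (Bernstein) inequality gives exactly the stated tail bound after substituting $y=x/\veps_n$ and enlarging $\|\beta^n\|_*^2$ to $1+\|\beta^n\|_*^2$. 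Your diagnosis of why $K_T$ is only measurable while $A_T$ is continuous is also the right one, and the continuity of $A_T$ via uniform convergence of the integrands on compacts is fine.

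The one point that deserves a caveat is the bound \eqref{eq:athatbd}. Under the paper's Assumption \ref{assu:main} (where $h$ and $b$ are only Lipschitz, hence of linear growth), the terms $\langle h(\hat\xi(T)),\beta^n(T)\rangle$ and $\int_0^T\beta^n_l(s)\langle\nabla h_l(\hat\xi(s)),b(\hat\xi(s))\rangle\,ds$ genuinely produce the product bound $C(1+\|\hat\xi\|_*)(1+\|\beta^n\|_*)$ that you obtain, and this does \emph{not} imply the additive bound $c_2(1+\|\hat\xi\|_*+\|\beta^n\|_*)$ with a single universal constant $c_2$, because of the cross term $\|\hat\xi\|_*\|\beta^n\|_*$. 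The additive form with a universal constant is correct under Heunis's original hypothesis that $h$ and $b$ are bounded; under the weakened assumptions the honest statement is the product bound. Your remark about ``absorbing the $\beta^n$-dependence into the constant'' makes the constant depend on $\om^*$ and is therefore not literally the proposition as stated, but it is exactly what every subsequent application requires (in Lemma \ref{llimsup} and Propositions \ref{inftyminus1}--\ref{limsup} the bound is only ever used for fixed $\om^*$ together with $\sup_n\|\beta^n(\om^*)\|_*<\infty$ a.s.), so nothing downstream breaks. This imprecision originates in the paper's relaxation of Heunis's hypotheses rather than in your argument.
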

Define
\begin{equation}
	G^n(\tilde \om, \om^*) \doteq -\phi(\hat \xi(\tilde \om)) - H(\hat \xi(\tilde \om), \xi_0(\om^*), v(\om^*))  + \Delta^n(\tilde \om, \om^*), \; (\tilde \om, \om^*) \in \Omega_x\times \Om^*.
\end{equation}
\begin{proposition}\label{inftyminus1}
	For any $\delta \in (0, \infty)$, and $\PP^*$ a.e. $\om^*$
	\begin{align*}
	\limsup_{n\to \infty} \veps_n^2 \log \int_{\{|\veps_n K_T(\hat\xi(\tilde \om), \beta^n(\om^*))|>\delta
	\}} e^{\veps_n^{-2}G^n(\tilde \om, \om^*) +  \veps_n^{-1} F(\tilde \om, \beta^n(\om^*))}\mu^{\veps_n} (d\tilde \om)=-\infty,
	\end{align*}
	\begin{align}\label{eq:atbd}
	\limsup_{n\to \infty} \veps_n^2 \log \int_{\{\veps_n K_T(\hat\xi(\tilde \om), \beta^n(\om^*))<-\delta\}} e^{\veps_n^{-2}G^n(\tilde \om, \om^*) +  \veps_n^{-1} A_T(\hat \xi(\tilde \om), \beta^n(\om^*))}\mu^{\veps_n} (d\tilde \om)=-\infty.
	\end{align}
\end{proposition}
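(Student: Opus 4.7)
The plan is to exploit the sub-Gaussian tail of $K_T$ from Proposition~\ref{prop:heunis}, which on $\{\veps_n |K_T(\hat\xi, \beta^n)| > \delta\}$ yields super-exponential decay of order $\exp(-c_1\delta^2/(\veps_n^4(1+\|\beta^n\|_*^2)))$, while absorbing the remaining factor $\exp(\veps_n^{-2} G^n + \veps_n^{-1} A_T)$ through Cauchy--Schwarz and the exponential moment bounds in Lemmas~\ref{expmomenttheorem}--\ref{llimsup}.

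A preliminary step is to establish that for $\PP^*$-a.e.\ $\om^*$ there is a finite $c(\om^*)$, independent of $n$, such that $G^n(\tilde\om, \om^*) \le c(\om^*)(1 + \|\hat\xi(\tilde\om)\|_*)$. This uses $H \ge 0$, boundedness of $\phi$, the Lipschitz (hence linear growth) property of $h$, and the a.s.\ convergence in \eqref{eq:ascgce} to fix $\sup_n (\|\xi^n\|_* + \|v^n\|_{\cll^2_m} + \|\beta^n\|_*) < \infty$. Combined with \eqref{eq:athatbd}, applying Lemmas~\ref{expmomenttheorem} and \ref{llimsup} then yields
\[
L^*(\om^*) \doteq \limsup_{n \to \infty} \veps_n^2 \log \int_{\Om_x} e^{2\veps_n^{-2} G^n + 2\veps_n^{-1} A_T(\hat\xi, \beta^n)} \, \mu^{\veps_n}(d\tilde\om) < \infty
\]
for $\PP^*$-a.e.\ $\om^*$; this will serve as the reservoir of exponential moments.

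For \eqref{eq:atbd}, Cauchy--Schwarz bounds the integral by $\mu^{\veps_n}(\veps_n K_T < -\delta)^{1/2}$ times the square root of the integral defining $L^*$. Proposition~\ref{prop:heunis} furnishes the tail estimate, so after applying $\veps_n^2 \log$ the first factor contributes $-c_1\delta^2/(2\veps_n^2(1+\|\beta^n\|_*^2)) \to -\infty$ while the second is at most $L^*/2$. For the first display, write $F = A_T + K_T$ and apply Cauchy--Schwarz to factor out $e^{\veps_n^{-1} K_T}$; a second Cauchy--Schwarz then separates $e^{2\veps_n^{-1} K_T}$ from the indicator $1_{\{|\veps_n K_T|>\delta\}}$, reducing the matter to controlling $\int e^{4\veps_n^{-1} K_T}\, \mu^{\veps_n}(d\tilde\om)$ and the tail probability. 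The crucial point is that the sub-Gaussian tail from Proposition~\ref{prop:heunis} gives, via a standard Chernoff argument, an MGF bound $E_{\mu^{\veps_n}} e^{\lambda K_T} \le C \exp(C' \lambda^2 \veps_n^2(1+\|\beta^n\|_*^2))$; substituting $\lambda = 4\veps_n^{-1}$ keeps the MGF $\om^*$-a.s.\ bounded in $n$, so the super-exponential decay of $\mu^{\veps_n}(|\veps_n K_T|>\delta)$ dominates and drives $\veps_n^2 \log$ of the whole expression to $-\infty$.

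The main obstacle is matching scales: naively, the factors $e^{\veps_n^{-2} G^n}$, $e^{\veps_n^{-1} A_T}$, and (in the first display) $e^{\veps_n^{-1} K_T}$ could all grow fast enough to overwhelm the tail. The key insight is that $K_T$ is sub-Gaussian with variance proxy of order $\veps_n^2$, so its MGF at scale $\lambda = O(\veps_n^{-1})$ stays bounded, while all other factors generate growth only at scale $e^{O(\veps_n^{-2})}$, whereas the tail delivers decay at scale $e^{-O(\veps_n^{-4})}$. Verifying that this hierarchy survives each Cauchy--Schwarz split, and that the $\om^*$-dependent random constants $c(\om^*)$ and $\sup_n\|\beta^n(\om^*)\|_*$ remain finite on the a.s.\ set where all our estimates apply, is the only delicate bookkeeping.
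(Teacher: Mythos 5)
Your proposal is correct, and it uses the same core ingredients as the paper (the sub-Gaussian tail and linear-growth bound of Proposition \ref{prop:heunis}, the exponential moment estimates of Lemmas \ref{expmomenttheorem} and \ref{llimsup}, and Cauchy--Schwarz), but the organization differs in two ways worth noting. First, for \eqref{eq:atbd} and the $\{\veps_n K_T<-\delta\}$ piece, the paper first extracts the factor $e^{-\delta\veps_n^{-2}}$ from the event, truncates $A_T$ at level $M$, treats the $\{A_T\ge M\}$ remainder via \eqref{eq:cobd}, and only then applies Cauchy--Schwarz to the truncated piece; you instead apply a single Cauchy--Schwarz against the full exponential moment $\int e^{2\veps_n^{-2}G^n+2\veps_n^{-1}A_T}\,d\mu^{\veps_n}$, whose finiteness on the $\veps_n^2\log$ scale follows from \eqref{eq:eq5125} and Lemma \ref{expmomenttheorem} exactly as you say. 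This is a mild streamlining since \eqref{eq:eq5125} already encapsulates the truncation argument. Second, and more substantively, for the $\{\veps_n K_T>\delta\}$ part the paper invokes Proposition 4.7 of Heunis as a black box for $\limsup_n\veps_n^2\log\int_{\{\veps_n K_T>\delta\}}e^{2\veps_n^{-1}K_T}\,d\mu^{\veps_n}=-\infty$, whereas you derive the needed control directly from the tail bound via the standard tail-to-MGF implication (your "Chernoff argument" is really the layer-cake/integration-by-parts direction, but the fact is correct): since the variance proxy of $K_T$ is $O(\veps_n^2(1+\|\beta^n\|_*^2))$ and $\sup_n\|\beta^n(\om^*)\|_*<\infty$ a.s.\ by \eqref{eq:ascgce}, the MGF at $\lambda=4\veps_n^{-1}$ stays bounded in $n$, and the double Cauchy--Schwarz then lets the $e^{-c_1\delta^2/(\veps_n^4(1+\|\beta^n\|_*^2))}$ tail dominate. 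This handles both signs of $K_T$ at once and makes the argument self-contained, at the cost of an extra (routine) lemma establishing the MGF bound. Both routes are valid; all the $\om^*$-dependent constants you need ($c(\om^*)$, $\sup_n\|\xi^n\|_*$, $\sup_n\|v^n\|_{\cll^2_m}$, $\sup_n\|\beta^n\|_*$) are indeed finite on the full-measure set where \eqref{eq:ascgce} holds, which is the same a.s.\ set the paper works on.
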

\begin{proof}
	Note that on the set $\{\veps_n K_T(\hat\xi(\tilde \om), \beta^n(\om^*))<-\delta\}$
	$$\veps_n^{-2}G^n(\tilde \om, \om^*) +  \veps_n^{-1} F(\tilde \om, \beta^n(\om^*)) \le 
	\veps_n^{-2}(G^n(\tilde \om, \om^*) - \delta) +  \veps_n^{-1} A_T(\hat\xi(\tilde \om), \beta^n(\om^*)).$$
	Also note that, using the linear growth of $h$, one can find a measurable map $\theta: \Om^* \to \RR_+$ such that
	\begin{equation}\label{eq:bdgntil}
		G^n(\tilde \om, \om^*) \le \theta(\om^*) (1+ \|\hat\xi(\tilde \om)\|_*), \mbox{ for all } \tilde \om \in \Omega_x, \; \PP^* \mbox{ a.e. } \om^*.
	\end{equation}
	Using these observations, we have
	\begin{align}
		& \int_{\{\veps_n K_T(\hat\xi(\tilde \om), \beta^n(\om^*))<-\delta\}} e^{\veps_n^{-2}G^n(\tilde \om, \om^*) +  \veps_n^{-1} F(\tilde \om, \beta^n(\om^*))}\mu^{\veps_n} (d\tilde \om)\nonumber\\
		&\le e^{\veps_n^{-2}(\theta(\om^*) - \delta)}\int_{\{\veps_n K_T(\hat\xi(\tilde \om), \beta^n(\om^*))<-\delta\}} e^{\veps_n^{-2}\theta(\om^*)\|\hat\xi(\tilde \om)\|_*+  \veps_n^{-1} A_T(\hat\xi(\tilde \om), \beta^n(\om^*))}\mu^{\veps_n} (d\tilde \om). \label{eq:eq109}
	\end{align}
Next, for every $M \in (0,\infty)$
\begin{align}
	&\int_{\{\veps_n K_T(\hat\xi(\tilde \om), \beta^n(\om^*))<-\delta\}} e^{\veps_n^{-2}\theta(\om^*)\|\hat\xi(\tilde \om)\|_*+  \veps_n^{-1} A_T(\hat\xi(\tilde \om), \beta^n(\om^*))}\mu^{\veps_n} (d\tilde \om)\nonumber\\
	&\le \int_{\{\veps_n K_T(\hat\xi(\tilde \om), \beta^n(\om^*))<-\delta\}} e^{\veps_n^{-2}\theta(\om^*)\|\hat\xi(\tilde \om)\|_*+  \veps_n^{-1} M}\mu^{\veps_n} (d\tilde \om)\nonumber\\
	&\quad + \int_{\{\veps_n K_T(\hat\xi(\tilde \om), \beta^n(\om^*))<-\delta\}} e^{\veps_n^{-2}\theta(\om^*)\|\hat\xi(\tilde \om)\|_*+  \veps_n^{-1} A_T(\hat\xi(\tilde \om), \beta^n(\om^*))} 1_{ \{A_T(\hat\xi(\tilde \om), \beta^n(\om^*))\ge M\}}\mu^{\veps_n} (d\tilde \om).\label{eq:eq1247}
\end{align}
We now consider the two terms in the above display separately.
For the first term, from Cauchy-Schwarz inequality, 
\begin{align*}
	&\int_{\{\veps_n K_T(\hat\xi(\tilde \om), \beta^n(\om^*))<-\delta\}} e^{\veps_n^{-2}\theta(\om^*)\|\hat\xi(\tilde \om)\|_*}\mu^{\veps_n} (d\tilde \om) \\
	&\le
	\left[\int_{\Omega_x} e^{2\veps_n^{-2}\theta(\om^*)\|\hat\xi(\tilde \om)\|_*}\mu^{\veps_n} (d\tilde \om)\right]^{1/2}
	\left[\mu^{\veps_n}\{\veps_n K_T(\hat\xi(\tilde \om), \beta^n(\om^*))<-\delta\}\right]^{1/2}
\end{align*}
and therefore
\begin{align*}
&\limsup_{n\to \infty}\veps_n^2 \log	\int_{\{\veps_n K_T(\hat\xi(\tilde \om), \beta^n(\om^*))<-\delta\}} e^{\veps_n^{-2}\theta(\om^*)\|\hat\xi(\tilde \om)\|_*}\mu^{\veps_n} (d\tilde \om) \\
&\le \limsup_{n\to \infty}\frac{\veps_n^2}{2} \log\int_{\Omega_x} e^{2\veps_n^{-2}\theta(\om^*)\|\hat\xi(\tilde \om)\|_*}\mu^{\veps_n} (d\tilde \om) +
\limsup_{n\to \infty}\frac{\veps_n^2}{2}  \log \mu^{\veps_n}\{ K_T(\hat\xi(\tilde \om), \beta^n(\om^*))<\frac{-\delta}{\veps_n}\}\\
&\le \limsup_{n\to \infty}\frac{\veps_n^2}{2}  \log\int_{\Omega_x} e^{2\veps_n^{-2}\theta(\om^*)\|\hat\xi(\tilde \om)\|_*}\mu^{\veps_n} (d\tilde \om) 
 - c_1\frac{\delta^2}{2\veps_n^{2} (1 + \|\beta^n\|_*^2)} \\
 &= - \infty
\end{align*}
where in the next to last line we have used Proposition \ref{prop:heunis} and in the last line we have appealed to Lemma \ref{expmomenttheorem} and the fact that $\sup_n\|\beta^n\|_*<\infty$ $\PP^*$ a.s.

For the second term on the right side in \eqref{eq:eq1247}, we have from Lemma \ref{llimsup} (see \eqref{eq:cobd})
and \eqref{eq:athatbd}
that
	\begin{align*}
	\limsup_{M\to \infty}\limsup_{n\to \infty} \veps_n^2 \log \int _{\Omega_x}  e^{ \veps_n^{-2} \theta(\om^*)\|\hat\xi(\tilde \om)\|_*+ \veps_n^{-1}  A_T(\hat\xi(\tilde \om), \beta^n(\om^*))}
	1_{\{A_T(\hat\xi(\tilde \om), \beta^n(\om^*)\ge M\}}
	 \mu^\veps (d\tilde \om) = -\infty.
	\end{align*}
Using the last two displays in \eqref{eq:eq1247} and combining with \eqref{eq:eq109} we have \eqref{eq:atbd} and 
	\begin{align*}
	\limsup_{n\to \infty} \veps_n^2 \log \int_{\{\veps_n K_T(\hat\xi(\tilde \om), \beta^n(\om^*))<-\delta\}} e^{\veps_n^{-2}G^n(\tilde \om, \om^*) +  \veps_n^{-1} F(\tilde \om, \beta^n(\om^*))}\mu^{\veps_n} (d\tilde \om)=-\infty.
	\end{align*}
%
%
%
	Next, from \cite[Proposition $4.7$]{Heunis1987nonlinear}, it follows that
	\begin{align*}
	\limsup_{n\to \infty} \veps_n^2 \log \int_{\{\veps_n K_T(\hat\xi(\tilde \om), \beta^n(\om^*))>\delta\}} e^{  2\veps_n^{-1} K_T(\hat\xi(\tilde \om), \beta^n(\om^*))}\mu^{\veps_n} (d\tilde \om)=-\infty.
	\end{align*}
	Now using Cauchy-Schwarz inequality and arguing as before, we see that
		\begin{align*}
		\limsup_{n\to \infty} \veps_n^2 \log \int_{\{\veps_n K_T(\hat\xi(\tilde \om), \beta^n(\om^*))>\delta\}} e^{\veps_n^{-2}G^n(\tilde \om, \om^*) +  \veps_n^{-1} F(\tilde \om, \beta^n(\om^*))}\mu^{\veps_n} (d\tilde \om)=-\infty.
		\end{align*}
	 We omit the details.

\end{proof}

The following proposition shows that the term involving $F$ in the definition of $\bar \Upsilon_1^n[\phi] $ can be ignored in proving the bound in \eqref{eq:lowvn1}.

\begin{proposition}\label{limsup} For $\PP^*$ a.e. $\om^*$,
	\begin{align*}
	\limsup_{n\to \infty} \veps_n^2 \log\int_{\Omega_x} e^{\veps_n^{-2}G^n(\tilde \om, \om^*) +  \veps_n^{-1} F(\tilde \om, \beta^n(\om^*))}\mu^{\veps_n} (d\tilde \om) 
	\leq \limsup_{n\to \infty} \veps_n^2 \log\int_{\Omega_x} e^{\veps_n^{-2}G^n(\tilde \om, \om^*) }\mu^{\veps_n} (d\tilde \om).
	\end{align*}
\end{proposition}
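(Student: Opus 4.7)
The plan is to exploit the decomposition $F(\tilde\om,\beta^n) = A_T(\hat\xi(\tilde\om),\beta^n) + K_T(\hat\xi(\tilde\om),\beta^n)$ established just before Proposition \ref{prop:heunis}: $A_T$ is continuous with linear growth in $\hat\xi$, so it can be absorbed by Lemma \ref{llimsup}, while $K_T$ is controlled only through its Gaussian-type tail. I will split the integral over $\Omega_x$ according to whether $|\veps_n K_T(\hat\xi(\tilde\om),\beta^n(\om^*))|$ is $\le \delta$ or $>\delta$. On the large-deviation set, Proposition \ref{inftyminus1} already gives $\veps_n^2\log(\cdot)\to -\infty$; on the complementary set the trivial pointwise bound $\veps_n^{-1}K_T \le \delta\veps_n^{-2}$ costs only $\delta$ after the outer $\veps_n^2\log$, and sending $\delta\to 0$ closes the gap.

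More precisely, I would fix $\om^*$ in the full $\PP^*$-measure event on which $\gamma(\om^*)\doteq\sup_n\|\beta^n(\om^*)\|_* < \infty$ (via \eqref{eq:ascgce}) and $\theta(\om^*)$ from \eqref{eq:bdgntil} is finite. On this event, Proposition \ref{prop:heunis} yields the uniform-in-$n$ bound $|A_T(\hat\xi(\tilde\om),\beta^n(\om^*))| \le c_2(1 + \gamma(\om^*) + \|\hat\xi(\tilde\om)\|_*)$, and by \eqref{eq:bdgntil} one has $G^n(\tilde\om,\om^*) \le \theta(\om^*)(1+\|\hat\xi(\tilde\om)\|_*)$ uniformly in $n$, so the hypotheses of Lemma \ref{llimsup} hold with $\bar A^{\veps_n}(\eta)\doteq A_T(\eta,\beta^n(\om^*))$ and $\bar\clr^{\veps_n}$ the function of $\eta\in\clc_d$ through which $G^n$ factors. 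On $\{|\veps_n K_T|\le \delta\}$ one extracts $e^{\delta\veps_n^{-2}}$ from the integrand to obtain
$$
\int_{\{|\veps_n K_T|\le \delta\}} e^{\veps_n^{-2}G^n + \veps_n^{-1}F}\, d\mu^{\veps_n} \;\le\; e^{\delta\veps_n^{-2}}\int_{\Omega_x} e^{\veps_n^{-2}G^n + \veps_n^{-1}A_T}\, d\mu^{\veps_n},
$$
and Lemma \ref{llimsup} bounds the $\limsup$ of $\veps_n^2\log$ of the right-hand side by $\limsup_n \veps_n^2\log\int_{\Omega_x} e^{\veps_n^{-2}G^n}\,d\mu^{\veps_n}$.

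Combining this with the negligibility of the complementary set via the elementary inequality $\veps_n^2\log(a_n+b_n) \le \veps_n^2\log 2 + \max\{\veps_n^2\log a_n,\veps_n^2\log b_n\}$ and then letting $\delta\to 0$ yields the claimed upper bound. The main obstacle is not any single estimate but the bookkeeping: one must verify that the $\PP^*$-null sets arising from \eqref{eq:ascgce}, \eqref{eq:bdgntil}, and Proposition \ref{prop:heunis} do not depend on $\delta$ or on $n$, and that the linear-growth bound on $A_T(\cdot,\beta^n(\om^*))$ is genuinely uniform in $n$ after fixing $\om^*$. It is precisely the a.s.\ finiteness of $\sup_n\|\beta^n\|_*$ coming from the Skorohod representation that makes this pathwise application of Lemma \ref{llimsup} legitimate.
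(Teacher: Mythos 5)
Your proposal is correct and follows essentially the same route as the paper: split on $\{|\veps_n K_T|\le\delta\}$ versus its complement, kill the complement with Proposition \ref{inftyminus1}, pay $\delta$ for $\veps_n^{-1}K_T$ on the good set, absorb $A_T$ via the first inequality of Lemma \ref{llimsup} using \eqref{eq:athatbd} and \eqref{eq:bdgntil}, and let $\delta\to 0$. Your explicit attention to the uniformity in $n$ of the $A_T$ bound after fixing $\om^*$ (via $\sup_n\|\beta^n(\om^*)\|_*<\infty$ from the Skorohod representation) is exactly the point the paper relies on implicitly.
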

\begin{proof} 
	Fix $\delta \in (0,\infty)$ and write
	\begin{align*}
	&\int_{\Omega_x} e^{\veps_n^{-2}G^n(\tilde \om, \om^*) +  \veps_n^{-1} F(\tilde \om, \beta^n(\om^*))}\mu^{\veps_n} (d\tilde \om) 	\\
	&= \int_{\{\veps_n K_T(\hat\xi(\tilde \om), \beta^n(\om^*))>\delta\}} e^{\veps_n^{-2}G^n(\tilde \om, \om^*) +  \veps_n^{-1} F(\tilde \om, \beta^n(\om^*))}\mu^{\veps_n} (d\tilde \om) \\
	&\quad + \int_{\{\veps_n K_T(\hat\xi(\tilde \om), \beta^n(\om^*))\le\delta\}} e^{\veps_n^{-2}G^n(\tilde \om, \om^*) +  \veps_n^{-1} F(\tilde \om, \beta^n(\om^*))}\mu^{\veps_n} (d\tilde \om).
	\end{align*}
	From Proposition \ref{inftyminus1}, 
	\begin{equation}\label{eq:neg840}
		\limsup_{n\to \infty} \veps_n^2 \log\int_{\{\veps_n K_T(\hat\xi(\tilde \om), \beta^n(\om^*))>\delta\}} e^{\veps_n^{-2}G^n(\tilde \om, \om^*) +  \veps_n^{-1} F(\tilde \om, \beta^n(\om^*))}\mu^{\veps_n} (d\tilde \om) = -\infty.
	\end{equation}
Next note that	
\begin{align*}
	&\int_{\{\veps_n K_T(\hat\xi(\tilde \om), \beta^n(\om^*))\le\delta\}} e^{\veps_n^{-2}G^n(\tilde \om, \om^*) +  \veps_n^{-1} F(\tilde \om, \beta^n(\om^*))}\mu^{\veps_n} (d\tilde \om)\\
	&\le \int_{\{\veps_n K_T(\hat\xi(\tilde \om), \beta^n(\om^*))\le\delta\}} e^{\veps_n^{-2}G^n(\tilde \om, \om^*) +  \delta \veps_n^{-2} +  \veps_n^{-1}A_T(\hat\xi(\tilde \om), \beta^n(\om^*))}\mu^{\veps_n} (d\tilde \om).
\end{align*}
Now recalling \eqref{eq:athatbd} and \eqref{eq:bdgntil} and applying the first inequality in 
Lemma \ref{llimsup} (i.e. \eqref{eq:eq5125}),  we get
\begin{align*}
	&\limsup_{n\to \infty} \veps_n^2 \log\int_{\{\veps_n K_T(\hat\xi(\tilde \om), \beta^n(\om^*))\le\delta\}} e^{\veps_n^{-2}G^n(\tilde \om, \om^*) +  \veps_n^{-1} F(\tilde \om, \beta^n(\om^*))}\mu^{\veps_n} (d\tilde \om)\\
	&\le \delta + \limsup_{n\to \infty} \veps_n^2 \log\int_{\Omega_x} e^{\veps_n^{-2}G^n(\tilde \om, \om^*)  }\mu^{\veps_n} (d\tilde \om).
\end{align*}
Since $\delta>0$ is arbitrary, the result follows on combining the above with \eqref{eq:neg840}.
\end{proof}
The proof of the following lemma follows along the lines of Varadhan's lemma (cf. \cite[Theorem 2.6]{stroock2012introduction}, \cite[Theorem 1.18 ]{budhiraja2019analysis}). We provide details for reader's convenience.
\begin{lemma}\label{unifconvlim} Let $\{Z^\veps\}_{\veps>0}$ be a collection of random variables with values in a Polish space $(\calX, d(\cdot, \cdot))$ that satisfies a LDP with  rate function $J$ and speed $\veps^{-2}$. Let $\phi:\calX\rightarrow \mathbb{R}$ be a  continuous function bounded from above, namely $\sup_{x\in \calX} \phi(x) <\infty$, and let $\{\psi^\veps\}_{\veps>0}$ be a collection of real measurable maps  on $\calX$ such that $\sup_{\veps>0} \sup_{x \in \calX} |\psi^\veps(x)| < \infty$.  Further suppose that 
	\begin{center}
		\begin{align*}
		\text{ for every $\delta>0$ and $x\in \calX$},&\text{ there exist $\veps_0(x)$, $\delta_1(x)\in (0,\infty)$ such that } |\psi^\veps(y)|<\delta,\;\\
		& \mbox{ for all }  d(x,y)<\delta_1(x) \text{ and  all } 0< \veps<\veps_0(x).
		\end{align*}
	\end{center}
	Then  
	\begin{align*}  
	\lim_{\veps \to 0}\veps^2\log\mathbb{E}[\exp\left(\veps^{-2}\left\{\phi(Z^\veps)+\psi^\veps(Z^\veps)\right\}\right)]=\sup_{x\in \calX}\left[\phi(x)-J(x)\right].
	\end{align*}
\end{lemma}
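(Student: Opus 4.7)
The plan is to treat the claim as a perturbation of Varadhan's lemma. Write $M \doteq \sup_{\veps>0}\|\psi^\veps\|_\infty$ and $\Phi \doteq \sup_{x\in \calX}[\phi(x)-J(x)]$. The heart of the argument is that the hypothesis makes $\psi^\veps$ uniformly asymptotically small on a neighborhood of each fixed point, so on any region located via a compactness argument it will contribute at most $\delta/\veps^2$ to the exponent and can be absorbed. I would prove matching upper and lower bounds.

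For the lower bound I would fix $x_0 \in \calX$ with $J(x_0)<\infty$ and $\eta,\delta>0$. Continuity of $\phi$ at $x_0$ together with the hypothesis on $\psi^\veps$ produce $r>0$ and $\veps^{*}>0$ such that $\phi(y)\ge \phi(x_0)-\eta$ and $|\psi^\veps(y)|<\delta$ whenever $d(x_0,y)<r$ and $\veps<\veps^{*}$. Restricting the expectation to the open ball $\{d(Z^\veps,x_0)<r\}$ gives
$$\EE\bigl[e^{\veps^{-2}(\phi+\psi^\veps)(Z^\veps)}\bigr] \ge e^{\veps^{-2}(\phi(x_0)-\eta-\delta)} \PP\bigl(d(Z^\veps,x_0)<r\bigr),$$
and the LDP lower bound for this open set gives $\liminf_\veps \veps^2\log \PP(d(Z^\veps,x_0)<r) \ge -J(x_0)$. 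Sending $\eta,\delta\downarrow 0$ and then taking the supremum over $x_0$ with $J(x_0)<\infty$ yields the required lower bound by $\Phi$.

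For the upper bound I plan a cover-and-split argument. Fix $\delta>0$ and a large $L>0$. Since $J$ is a rate function, $K_L \doteq \{J\le L\}$ is compact; covering $K_L$ with balls $B(x,\delta_1(x))$ from the hypothesis and extracting a finite subcover $B_1,\dots,B_N$ with threshold $\veps^{**}\doteq \min_{1\le i\le N}\veps_0(x_i)$, set $G\doteq \bigcup_i B_i$. For $\veps<\veps^{**}$, on $G$ one has $|\psi^\veps|<\delta$, so pulling out $e^{\delta/\veps^2}$ and applying the classical Varadhan upper bound (which uses only the LDP upper bound and the pointwise bound $e^{\veps^{-2}\phi}\le e^{\veps^{-2}\|\phi^+\|_\infty}$, both available since $\phi$ is bounded above) yields
$$\limsup_{\veps\to 0} \veps^2 \log \EE\bigl[e^{\veps^{-2}(\phi+\psi^\veps)(Z^\veps)} \mathbf{1}_G\bigr] \le \delta + \Phi.$$
On the closed set $G^c \subset \{J>L\}$, the crude bound $\phi+\psi^\veps \le \|\phi^+\|_\infty + M$ combined with the LDP upper bound applied to $G^c$ produces
$$\limsup_{\veps\to 0} \veps^2 \log \EE\bigl[e^{\veps^{-2}(\phi+\psi^\veps)(Z^\veps)} \mathbf{1}_{G^c}\bigr] \le \|\phi^+\|_\infty + M - L.$$
Taking the maximum of the two bounds, then sending $L\to\infty$ and finally $\delta\downarrow 0$, gives the upper bound by $\Phi$.

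I do not anticipate a serious obstacle; the argument is exactly the classical proof of Varadhan's lemma, with the perturbation $\psi^\veps$ neutralized precisely on the region where the LDP is informative. The only point requiring care is to use the compactness of $K_L$ to pass from the local-in-$x$ hypothesis on $\psi^\veps$ to a uniform bound on $G$, which is what permits the splitting into the two regimes above.
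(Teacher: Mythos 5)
Your argument is correct and follows essentially the same route as the paper: a lower bound from the LDP applied to a small ball around a near-optimizer on which both $\phi$ and $\psi^\veps$ are controlled, and an upper bound obtained by covering a compact sublevel set of $J$ with the balls from the hypothesis to neutralize $\psi^\veps$ there, then splitting off the complement where the LDP upper bound makes the contribution negligible. The only (harmless) difference is organizational: you cite the classical Varadhan upper bound for continuous, bounded-above $\phi$ on the good region and let $L\to\infty$, whereas the paper fixes one compact set $K=\{J\le |S|+R\}$ and redoes the covering estimates for $\phi$ and $J$ explicitly on each ball.
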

\begin{proof} 
	Define $R\doteq \sup_{x\in \calX}(\phi(x)+\sup_{\veps>0}|\psi^\veps(x)|)$, 
	$S\doteq \sup_{x\in \calX}(\phi(x)-J(x))$ and $K\doteq \{x\in \calX: J(x)\leq |S|+R\}$. 
	Since $J$ is a rate function, $K$ is a compact subset of $\calX$.
	
	Fix $\delta \in (0,\infty)$. From the hypothesis of the lemma, for each $x\in \calX$, there exist $\delta_1(x), \veps_0(x) \in (0, \infty)$ such that $ |\psi^\veps(y)|<\delta$ for every $y\in B(x,\delta_1(x))$ and $\veps \in (0, \veps_0(x))$,
	where $B(z,\gamma) \doteq \{x\in \calX: d(x,z)<\gamma\}$ is an open ball of radius $\gamma$ in $\calX$.
	Also, from the continuity of $\phi$, for every $x\in \calX$ there exists $\delta_2(x)\in (0,\infty)$ such that
	\begin{align*}
	 |\phi(x)-\phi(y)|<\delta,\; \forall y\in B(x,\delta_2(x)). 
	\end{align*}
	Next, from the lower semi-continuity of $J$, for every $x\in\calX$, there exists $\delta_3(x) \in (0,\infty)$ such that 
	\begin{align*}
	J(x)\leq \inf_{y\in \overline{B(x,\delta_3(x))}}J(y)+\delta.
	\end{align*}
	Let $\bar \delta(x) \doteq \min\{\delta_1(x), \delta_2(x), \delta_3(x)\}$.
	Now define an open cover $\cup_{x\in K}U(x)$ of $K$ using the following open sets: 
	\begin{align*}
	U(x)\doteq B(x,\bar\delta(x)),\;  x\in K.
	\end{align*}
	Note that for any, $x\in K$, $y\in U(x)$ and $\veps<\veps_0(x)$, we have
	\begin{align}\label{eq:threefacts}
	|\psi^\veps(y)|<\delta ,\text{  } |\phi(x)-\phi(y)|<\delta \text{ and } J(x)\leq \inf_{z\in \overline{U(x)}}J(z)+\delta.
	\end{align}
	Since $K$ is compact, there exists $N \in \NN$ and $\{x_i\}_{i=1}^N \subset K$ such that $\{U_i\doteq U(x_i)\}_{i=1}^N$cover $K$. For $i=1, \ldots , N$, we can find $0< \veps(x_i) \le \veps_0(x_i)$  such that with 
	$\bar\veps_0\doteq \min_{i=1,\ldots,N}\veps(x_i)$, for every $\veps<\bar \veps_0$,
	\begin{align}\label{eq:uif}
	\mathbb{P}[Z^\veps\in \overline{U_i}]\leq \exp\left[\veps^{-2}(-b_i+\delta)\right],\;\;\;
	\mathbb{P}[Z^\veps\in F]\leq \exp\left[\veps^{-2}(-\inf_{x\in F}J(x) +\delta)\right]
	\end{align}
	where, $F\doteq \left(\cup_{i=1}^N{U_i}\right)^c$ and $b_i\doteq \inf_{x\in \overline{U_i}}J(x)$.

	Next note that
	\begin{align}
	\mathbb{E}[\exp\left(\veps^{-2}\left\{\phi(Z^\veps)+\psi^\veps(Z^\veps)\right\}\right)]&= \mathbb{E}[\exp\left(\veps^{-2}\left\{\phi(Z^\veps)+\psi^\veps(Z^\veps)\right\}\right)1_{\cup_{i=1}^N{U_i}}(Z^\veps)]\nonumber\\
	&\quad +\mathbb{E}[\exp\left(\veps^{-2}\left\{\phi(Z^\veps)+\psi^\veps(Z^\veps)\right\}\right)1_{F}(Z^\veps)]\nonumber\\
	&\leq \sum_{i=1}^N \mathbb{E}[\exp\left(\veps^{-2}\left\{\phi(Z^\veps)+\psi^\veps(Z^\veps)\right\}\right)1_{{U}_i}(Z^\veps)]\nonumber\\ &\quad+\mathbb{E}[\exp\left(\veps^{-2}\left\{\phi(Z^\veps)+\psi^\veps(Z^\veps)\right\}\right)1_{F}(Z^\veps)].\label{eq:eq155}
\end{align}
	Defining $a_i\doteq \inf_{x\in \overline{U_i}}\phi(x)$, we have 
	$|a_i-\phi(x)|<2\delta$, for $x\in {U_i}$. 
Thus, using \eqref{eq:threefacts}  and \eqref{eq:uif}
\begin{align}
&\limsup_{\veps \to 0}\veps^2\log\mathbb{E}[\exp\left(\veps^{-2}\left\{\phi(Z^\veps)+\psi^\veps(Z^\veps)\right\}\right)
1_{{U}_i}(Z^\veps)]\nonumber\\
&\le (a_i-b_i+4\delta) \le \phi(x_i) - J(x_i) +5 \delta \le \sup_{x\in \calX}\left[\phi(x)-J(x)\right] + 5\delta. \label{eq:uibd}
\end{align}
Also
\begin{align}
&\limsup_{\veps \to 0}\veps^2\log\mathbb{E}[\exp\left(\veps^{-2}\left\{\phi(Z^\veps)+\psi^\veps(Z^\veps)\right\}\right)1_{F}] \le R - \inf_{x\in F}J(x) +\delta\nonumber\\
&\quad\le -|S|+\delta \le \sup_{x\in \calX}\left[\phi(x)-J(x)\right] +\delta,\label{eq:fbd}
\end{align}
where the second inequality is a consequence of the observation that $F \subset K^c$.
Since $\delta>0$ is arbitrary, using \eqref{eq:uibd} and \eqref{eq:fbd} in \eqref{eq:eq155} we now see that
\begin{equation}\label{eq:eq206}
\limsup_{\veps \to 0}\veps^2\log	\mathbb{E}[\exp\left(\veps^{-2}\left\{\phi(Z^\veps)+\psi^\veps(Z^\veps)\right\}\right)]
\le \sup_{x\in \calX}\left[\phi(x)-J(x)\right].
\end{equation}
For the lower bound, choose $x_0$ such that $\phi(x_0)-J(x_0)\geq S-\delta$. 
Let $\delta(x_0),  \veps(x_0)\in (0,\infty)$ be such that for all 
 $x\in U\doteq B(x_0,\delta(x_0))$,  $|\phi(x)-\phi(x_0)|<\delta$ and $|\psi^\veps(x)|<\delta$, for $\veps<\veps(x_0)$. Then
	\begin{align*}
	&\liminf_{\veps \to 0}\veps^2\log\mathbb{E}\left[\exp\left(\veps^{-2}\left\{\phi(Z^\veps)+\psi^\veps(Z^\veps)\right\}\right)\right]\\
	&\geq \liminf_{\veps \to 0}\veps^2\log\mathbb{E}\left[\exp\left(\veps^{-2}\left\{\phi(Z^\veps)+\psi^\veps(Z^\veps)\right\}\right)1_{U}(Z^\veps)\right]\\
	&\geq \phi(x_0)-2\delta + \liminf_{\veps \to 0}\veps^2\log \mathbb{P}\left[Z^\veps \in U\right]\\
	&\geq \phi(x_0)-2\delta-\inf_{x\in U}J(x)
	\geq \phi(x_0)-2\delta-J(x_0)
	 \geq \sup_{x\in \calX}\left[\phi(x)-J(x)\right]-3\delta.
	\end{align*} 
	Sending $\delta \to 0$ we have the lower bound and combining it with the upper bound in \eqref{eq:eq206}, we have the result.
\end{proof}

Recall the definition of $\Delta^n_1$ from \eqref{eq:delndef}. 
The following lemma will allow us to apply Lemma \ref{unifconvlim}.
\begin{lemma}\label{uniformconv} For $\PP^*$ a.e. $\om^*$ and every $\delta \in (0,\infty)$ and $\eta \in \clc_d$ there exist $n_0 \in \NN$ and $\delta_1 \in (0,\infty)$ such that
	$$|\Delta^n_1(\tilde \eta)|<\delta \mbox{ whenever } \tilde \eta \in \clc_d, \; \|\eta - \tilde \eta\|_*\le \delta_1 \mbox{ and } n \ge n_0.$$
\end{lemma}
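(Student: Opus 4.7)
The plan is to split the difference $\Delta^n_1(\tilde\eta)-\Delta^n_1(\eta)$ into a part that is controlled by the Lipschitz property of $h$ uniformly in $n$, and then invoke the pointwise convergence $\Delta^n_1(\eta)\to 0$ already noted in \eqref{eq:delntoz}. Fix $\om^*$ in the set of full $\PP^*$-measure on which $\xi^n(\om^*)\to \xi_0(\om^*)$ in $\clc_d$ and $(u_n,v_n)(\om^*)\to (u,v)(\om^*)$ weakly in $\cll^2_k\times\cll^2_m$; in particular, along this sample point,
\begin{equation*}
C_1(\om^*)\doteq \sup_{n}\|h(\xi^n(\om^*))-h(\xi_0(\om^*))\|_{\cll^2_m}<\infty,\qquad
C_2(\om^*)\doteq \sup_{n}\|v^n(\om^*)-v(\om^*)\|_{\cll^2_m}<\infty,
\end{equation*}
the first by the uniform convergence $\xi^n\to\xi_0$ together with the Lipschitz property of $h$, and the second by the boundedness of $(v_n,v)$ in $S_M$.

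Next I would observe from the definition \eqref{eq:delndef} that the $\eta$-dependence in $\Delta^n_1(\eta)$ enters only through $h(\eta(\cdot))$, and only in the two terms
\begin{equation*}
\int_0^T h(\eta(s))\cdot (h(\xi^n(s))-h(\xi_0(s)))\,ds\qquad\text{and}\qquad \int_0^T h(\eta(s))\cdot (v^n(s)-v(s))\,ds,
\end{equation*}
all other summands being independent of $\eta$. Hence, by Cauchy--Schwarz and the global Lipschitz bound $\|h(\tilde\eta(s))-h(\eta(s))\|\le c_{\lip}\|\tilde\eta-\eta\|_*$,
\begin{equation*}
|\Delta^n_1(\tilde\eta)-\Delta^n_1(\eta)|
\;\le\; c_{\lip}\,T^{1/2}\,\|\tilde\eta-\eta\|_*\bigl(C_1(\om^*)+C_2(\om^*)\bigr),
\end{equation*}
uniformly in $n$. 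This bound is the heart of the argument: it encodes the fact that modulus-of-continuity estimates in $\eta$ are preserved uniformly in $n$ because they only use the (deterministic) Lipschitz constant of $h$ and uniform $\cll^2$-bounds on the random quantities $h(\xi^n)-h(\xi_0)$ and $v^n-v$.

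To conclude, given $\delta>0$ and $\eta\in\clc_d$, choose
\begin{equation*}
\delta_1\doteq \frac{\delta}{2\bigl(1+c_{\lip}T^{1/2}(C_1(\om^*)+C_2(\om^*))\bigr)},
\end{equation*}
so that $|\Delta^n_1(\tilde\eta)-\Delta^n_1(\eta)|\le \delta/2$ whenever $\|\tilde\eta-\eta\|_*\le\delta_1$, for every $n$. By \eqref{eq:delntoz} applied at the fixed $\eta$, there is $n_0=n_0(\eta,\om^*)\in\NN$ such that $|\Delta^n_1(\eta)|\le\delta/2$ for all $n\ge n_0$. Combining, $|\Delta^n_1(\tilde\eta)|\le\delta$ for $\|\tilde\eta-\eta\|_*\le\delta_1$ and $n\ge n_0$, as desired. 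I do not foresee a technical obstacle here; the only subtlety is to verify, as above, that $C_1(\om^*)$ is a.s.\ finite, which uses the a.s.\ convergence $\xi^n\to\xi_0$ in $\clc_d$ combined with the Lipschitz property of $h$.
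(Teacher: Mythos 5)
Your proposal is correct and follows essentially the same route as the paper: both split $|\Delta^n_1(\tilde\eta)|\le|\Delta^n_1(\tilde\eta)-\Delta^n_1(\eta)|+|\Delta^n_1(\eta)|$, handle the second term via the pointwise convergence \eqref{eq:delntoz}, and control the first term uniformly in $n$ by noting that the $\eta$-dependence enters only through the two integrals against $h(\xi^n)-h(\xi_0)$ and $v^n-v$, whose $\cll^2$ norms are a.s.\ bounded uniformly in $n$. Your use of Cauchy--Schwarz with the explicit Lipschitz constant is a minor (and slightly more quantitative) variant of the paper's bound via the continuity of $h$; no gap.
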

\begin{proof}
	Consider $\om^*$ in the set of full measure on which the convergence in \eqref{eq:ascgce} (and thus in \eqref{eq:delntoz}) holds.
	From \eqref{eq:delntoz}, for any fixed  $\delta \in (0,\infty)$ and $\eta \in \clc_d$, we can find $n_0 \in \NN$ such that for all $n \ge n_0$
	\begin{equation}
		|\Delta^n_1(\eta, \om^*)| \le \frac{\delta}{2}.
	\end{equation}
	Also, from continuity of $h$, we can find a  $\delta_1 \in (0,\infty)$ such that for all  $\tilde \eta \in \clc_d$ with $\|\eta - \tilde \eta\|_* \le \delta_1$
	\begin{equation*}\sup_{n\in \NN} \int_0^T \|h(\eta(s)) - h(\tilde \eta(s))\| (\|h(\xi^n(s)) \| + \|h(\xi_0(s))\|) ds \le \frac{\delta}{4}\end{equation*}
		and
		\begin{equation*}\sup_{n\in \NN} \int_0^T \|h(\eta(s)) - h(\tilde \eta(s))\| (\|v^n(s)\| + \|v(s)\|) ds \le \frac{\delta}{4}.\end{equation*}
			Thus for all $n \ge n_0$ and $\tilde \eta\in \clc_d$ with $\|\eta - \tilde \eta\|_* \le \delta_1$
\begin{align*}
	|\Delta^n_1(\tilde \eta)| &\le |\Delta^n_1(\tilde \eta) -\Delta^n_1(\eta)| + |\Delta^n_1(\eta)|\\
	& \le \int_0^T \|h(\eta(s)) - h(\tilde \eta(s))\| (\|h(\xi^n(s)) \| + \|h(\xi_0(s))\|) ds\\
	&\quad +  \int_0^T \|h(\eta(s)) - h(\tilde \eta(s))\| (\|v^n(s)\| + \|v(s)\|) ds + \frac{\delta}{2}
	\le \delta.
\end{align*}

\end{proof}

We now complete the proof of \eqref{eq:lowvn1}.\\

\noindent {\bf Completing the proof of \eqref{eq:lowvn1}.}
Note that, from Proposition \ref{limsup}, $\PP^*$ a.s.,
\begin{align}
	\limsup_{n\to \infty}-\bar \Upsilon_1^n[\phi]
	&= \limsup_{n\to \infty} \veps_n^2 \log\int_{\Omega_x} e^{\veps_n^{-2}G^n(\tilde \om, \om^*) +  \veps_n^{-1} F(\tilde \om, \beta^n(\om^*))}\mu^{\veps_n} (d\tilde \om) \nonumber\\
	&\leq \limsup_{n\to \infty} \veps_n^2 \log\int_{\Omega_x} e^{\veps_n^{-2}G^n(\tilde \om, \om^*) }\mu^{\veps_n} (d\tilde \om).\label{eq:eq505}
\end{align}

For $Q\in (0,\infty)$, let $\Delta^{n,Q} \doteq (\Delta^n \wedge Q)\vee (-Q)$. Then
\begin{align}
	\int_{\Omega_x} e^{\veps_n^{-2}G^n(\tilde \om, \om^*)} \mu^{\veps_n} (d\tilde \om) &\le 
	\int_{\Omega_x} e^{\veps_n^{-2}G^n(\tilde \om, \om^*) }1_{\{|\Delta^n|\ge Q\}} \mu^{\veps_n} (d\tilde \om)\nonumber\\
	 &\quad + \int_{\Omega_x} e^{\veps_n^{-2}\left(-\phi(\hat \xi(\tilde \om)) - H(\hat \xi(\tilde \om), \xi_0(\om^*), v(\om^*))  + \Delta^{n,Q}(\om, \om^*)\right)} \mu^{\veps_n} (d\tilde \om).\label{eq:eq506}
\end{align}
Note that $\phi$ is a continuous and bounded map on $\clc_d$, $\eta \mapsto H(\eta, \xi_0, v)$ is a 
continuous, nonnegative map on $\clc_d$ and $\eta \mapsto \Delta^n_1(\eta, \om^*) \wedge Q \vee (-Q)$ 
is a   map uniformly bounded in $n$ which satisfies the properties in Lemma \ref{uniformconv}. Thus applying Lemma \ref{unifconvlim} and the large deviations result from \eqref{eq:ldpsign}, we have
\begin{equation}\label{eq:eq507}
	\begin{aligned}
	&\limsup_{n\to \infty} \veps_n^2 \log\int_{\Omega_x} e^{\veps_n^{-2}\left(-\phi(\hat \xi(\tilde \om)) - H(\hat \xi(\tilde \om), \xi_0(\om^*), v(\om^*))  + \Delta^{n,Q}(\om, \om^*)\right)} \mu^{\veps_n} (d\tilde \om)\\
	& \le -\inf_{\eta \in \clc_d} \left[ H(\eta, \xi_0,  v) + \phi(\eta) + J(\eta)\right].
	\end{aligned}
\end{equation}
Next, using the linear growth property of $h$
$$\sup_n|\Delta^n_1(\eta)| \le c_{\Delta}(\om^*) (1 + \|\eta\|_*), \; \PP^* \mbox{ a.s. } $$
for some measurable map $c_{\Delta}: \Om^* \to \RR_+$. Thus, using the boundedness of $\phi$ and the nonnegativity of $H$,  we have
\begin{align*}
	&\limsup_{Q\to \infty}\limsup_{n\to \infty} \veps_n^2 \log\int_{\Omega_x} e^{\veps_n^{-2}G^n(\tilde \om, \om^*) }1_{\{|\Delta^n|\ge Q\}} \mu^{\veps_n} (d\tilde \om)\\
	&\le \limsup_{Q\to \infty}\limsup_{n\to \infty} \veps_n^2 \log\int_{\Omega_x} e^{\veps_n^{-2} (c_{\Delta} +\|\phi\|_{\infty})(1 + \|\hat \xi(\tilde \om)\|_*) }1_{\{c_{\Delta}(1 + \|\hat \xi(\tilde \om)\|_*)\ge Q\}} \mu^{\veps_n} (d\tilde \om) = - \infty
\end{align*}
where the last equality follows from Lemma \ref{llimsup} (see \eqref{eq:cobd}). Using the last bound together with \eqref{eq:eq507} in \eqref{eq:eq506} and \eqref{eq:eq505} we now have the inequality in \eqref{eq:lowvn1}. \hfill \qed

\subsection{Proof of \eqref{eq:uppvn1}}
\label{sec:pfofuppvn}
Recall the convergence from \eqref{eq:ascgce}.
We begin with the following lemma.
\begin{lemma}\label{lliminf} For $\PP^*$ a.e. $\om^*$
	\begin{align*}
	\liminf_{n\to \infty} \veps_n^2 \log \int _{\Omega_x} e^{\veps_n^{-2}G^n(\tilde \om, \om^*) +  \veps_n^{-1} A_T(\hat \xi(\tilde \om), \beta^n(\om^*))} \mu^\veps (d\tilde \om)&\geq -\inf_{\eta\in \calC_d}\left[ H(\eta, \xi_0,  v) + \phi(\eta)+J(\eta)\right].\\
	\end{align*}
\end{lemma}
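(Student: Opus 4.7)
The plan is a standard Varadhan-type lower bound: localize the integral to a small neighborhood of a near-minimizer in $\calC_d$ and invoke the LDP lower bound for $\hat\xi$ under $\mu^{\veps_n}$ supplied by \eqref{eq:ldpsign}. If the infimum on the right hand side is $+\infty$ there is nothing to prove, so fix $\delta \in (0,\infty)$ and, working on the full-measure set of $\om^*$ on which \eqref{eq:ascgce} holds (so that $\xi_0 = \xi_0(\om^*)$ and $v = v(\om^*)$ are fixed elements of $\calC_d$ and $\cll^2_m$), choose $\eta_0 \in \calC_d$ with $J(\eta_0) < \infty$ satisfying
$$\phi(\eta_0) + H(\eta_0, \xi_0, v) + J(\eta_0) \le \inf_{\eta \in \calC_d}\left[\phi(\eta) + H(\eta, \xi_0, v) + J(\eta)\right] + \delta.$$

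For the localization, use continuity of $\phi$ and of the map $\eta \mapsto H(\eta, \xi_0, v)$ (the latter follows from the Lipschitz property of $h$ in Assumption~\ref{assu:main}), together with Lemma~\ref{uniformconv} applied to $\Delta^n_1$ at $\eta_0$, to select $\delta_1 \in (0,\infty)$ and $n_0 \in \NN$ such that for every $n \ge n_0$ and every $\eta \in U \doteq \{\tilde\eta \in \calC_d : \|\tilde\eta - \eta_0\|_* < \delta_1\}$,
$$|\phi(\eta) - \phi(\eta_0)| < \delta,\qquad |H(\eta, \xi_0, v) - H(\eta_0, \xi_0, v)| < \delta,\qquad |\Delta^n_1(\eta, \om^*)| < \delta.$$
Since $\beta^n \to \beta$ $\PP^*$-a.s.\ by \eqref{eq:ascgce}, $\sup_n \|\beta^n(\om^*)\|_* < \infty$, and the growth bound \eqref{eq:athatbd} then yields a finite constant $C = C(\om^*)$ with $|A_T(\hat\xi(\tilde\om), \beta^n(\om^*))| \le C$ for all $n$ and all $\tilde\om$ with $\hat\xi(\tilde\om) \in U$. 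Consequently, for $n \ge n_0$ and such $\tilde\om$,
$$\veps_n^{-2} G^n(\tilde\om, \om^*) + \veps_n^{-1} A_T(\hat\xi(\tilde\om), \beta^n(\om^*)) \ge -\veps_n^{-2}\bigl[\phi(\eta_0) + H(\eta_0, \xi_0, v) + 3\delta\bigr] - \veps_n^{-1} C.$$

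Restricting the integral to $\{\hat\xi \in U\}$, taking $\veps_n^2 \log$, and applying the LDP lower bound for the law of $\hat\xi$ under $\mu^{\veps_n}$ (a direct consequence of \eqref{eq:ldpsign}) to the open set $U$,
$$\liminf_{n\to\infty} \veps_n^2 \log \int_{\Omega_x} e^{\veps_n^{-2} G^n + \veps_n^{-1} A_T}\, d\mu^{\veps_n} \ge -\phi(\eta_0) - H(\eta_0, \xi_0, v) - 3\delta - \inf_{\eta \in U} J(\eta) \ge -\phi(\eta_0) - H(\eta_0, \xi_0, v) - J(\eta_0) - 3\delta,$$
since $\eta_0 \in U$. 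By the choice of $\eta_0$ this is at least $-\inf_{\eta \in \calC_d}\left[\phi(\eta) + H(\eta, \xi_0, v) + J(\eta)\right] - 4\delta$, and letting $\delta \downarrow 0$ yields the claim.

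The main obstacle is controlling the two nonstandard features of the integrand: the lower-order (in $\veps_n$) perturbation $\veps_n^{-1} A_T$ with linearly-growing integrand $A_T$, and the non-uniform error $\Delta^n$. Both dissolve once we restrict to the bounded neighborhood $U$: on $U$, \eqref{eq:athatbd} combined with the a.s.\ boundedness of $\sup_n \|\beta^n\|_*$ makes $A_T = O(1)$, so the $\veps_n^{-1} A_T$ contribution becomes an additive $O(\veps_n)$ after multiplication by $\veps_n^2$; and Lemma~\ref{uniformconv} supplies exactly the local uniformity of $\Delta^n_1 \to 0$ around $\eta_0$ needed to absorb $\Delta^n$ into the $O(\delta)$ error. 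Note that unlike \eqref{eq:lowvn1}, the $F$ term (split as $A_T + K_T$) does not need to be estimated using the $K_T$ exponential estimates of Proposition~\ref{prop:heunis}, because $A_T$ alone is already $O(1)$ on $U$.
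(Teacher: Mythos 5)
Your proof is correct and follows essentially the same route as the paper: localize to a neighborhood of a near-minimizer $\eta_0$, use Lemma \ref{uniformconv} and continuity of $\phi$ and $H$ to control the exponent there up to $O(\delta)$, note that the $\veps_n^{-1}A_T$ term is $O(\veps_n^{-1})$ and hence negligible after multiplying by $\veps_n^2$, and apply the LDP lower bound from \eqref{eq:ldpsign} to the open set $U$. The only (immaterial) difference is that you bound $A_T$ on $U$ via the linear growth estimate \eqref{eq:athatbd} together with $\sup_n\|\beta^n\|_*<\infty$, whereas the paper uses the continuity of $A_T$ to compare with $A_T(\eta_0,\beta^n)$ and then the finiteness of $\sup_n|A_T(\eta_0,\beta^n(\om^*))|$.
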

\begin{proof} 
	Fix $\eta_0 \in \clc_d$ and $\delta \in (0,\infty)$. From continuity of $\phi$ on $\clc_d$, of  $A_T$ on $\clc_d\times \clc_m$, and of $\eta \mapsto H(\eta, \xi_0(\om^*), v(\om^*))$  (for $\PP^*$ a.e. $\om^*$)  on $\clc_d$, a.s. convergence of $\beta^n$ to $\beta$,
	and Lemma \ref{uniformconv},
	we can find, for  $\PP^*$ a.e. $\om^*$, a neighbourhood $G$ of $\eta_0$ and $n_1 \in \NN$ such that
	\begin{align*}
	&\inf_{\tilde \eta\in G} A_T(\tilde \eta, \beta^n(\om^*))\geq A_T(\eta_0, \beta^n(\om^*))-\delta,  \text{  for all  }  n \ge n_1,\\
	&\quad \inf_{\tilde \eta\in G}  [-\phi(\tilde \eta) - H(\tilde \eta, \xi_0(\om^*), v(\om^*)]\geq [-\phi(\eta_0) - H(\eta_0, \xi_0(\om^*), v(\om^*)]-\delta,\\
	&\quad \sup_{\tilde \eta\in G} |\Delta^n_1(\tilde \eta)|<\delta \mbox{ for all  }  n \ge n_1.
	\end{align*}
	Observe that
	\begin{align*}
		&\int _{\Omega_x} e^{\veps_n^{-2}G^n(\tilde \om, \om^*) +  \veps_n^{-1} A_T(\hat \xi(\tilde \om), \beta^n(\om^*))} \mu^\veps (d\tilde \om)\\
		&\ge \int _{\Omega_x} e^{\veps_n^{-2}G^n(\tilde \om, \om^*) +  \veps_n^{-1} A_T(\hat \xi(\tilde \om), \beta^n(\om^*))} 1_{\{\hat \xi(\tilde \om) \in G\}} \mu^\veps (d\tilde \om)\\
		&\ge   e^{\veps_n^{-2} [-\phi(\eta_0) - H(\eta_0, \xi_0(\om^*), v(\om^*) -2\delta] +  \veps_n^{-1} (A_T(\eta_0, \beta^n(\om^*))-\delta)}  \mu^\veps (G),
	\end{align*}
	Noting that $\sup_n|A_T(\eta_0, \beta^n(\om^*))|<\infty$ $\PP^*$ a.s. and applying the large deviation result from \eqref{eq:ldpsign}, we now have
\begin{align*}
	&\liminf_{n\to \infty} \veps_n^2 \log\int _{\Omega_x} e^{\veps_n^{-2}G^n(\tilde \om, \om^*) +  \veps_n^{-1} A_T(\hat \xi(\tilde \om), \beta^n(\om^*))} \mu^\veps (d\tilde \om)\\
	&\ge  [-\phi(\eta_0) - H(\eta_0, \xi_0(\om^*), v(\om^*) -2\delta] - \inf_{\tilde \eta \in G} J(\tilde \eta)\\
	&\ge  -\phi(\eta_0) - H(\eta_0, \xi_0(\om^*), v(\om^*) -  J(\eta_0) - 2\delta.
\end{align*}
	Since $\delta \in (0, \infty)$ and $\eta_0 \in \clc_d$ are arbitrary, the result follows.
\end{proof}

We now complete the proof of \eqref{eq:uppvn1}.\\

\noindent {\bf Completing the proof of \eqref{eq:uppvn1}.}
Fix $\delta \in (0,\infty)$. Then 
\begin{align*}
	&\int_{\Omega_x} e^{\veps_n^{-2}G^n(\tilde \om, \om^*) +  \veps_n^{-1} F(\tilde \om, \beta^n(\om^*))}\mu^{\veps_n} (d\tilde \om) \\
	 &\ge \int_{\{\veps_n K_T(\hat\xi(\tilde \om), \beta^n(\om^*))\ge-\delta\}} e^{\veps_n^{-2}G^n(\tilde \om, \om^*) +  \veps_n^{-1} F(\tilde \om, \beta^n(\om^*))}\mu^{\veps_n} (d\tilde \om) \\
	 &\ge \int_{\{\veps_n K_T(\hat\xi(\tilde \om), \beta^n(\om^*))\ge-\delta\}} e^{\veps_n^{-2}G^n(\tilde \om, \om^*) +  \veps_n^{-1} (- \delta \veps_n^{-1} +  A_T(\hat \xi(\tilde \om), \beta^n(\om^*))}\mu^{\veps_n} (d\tilde \om) \\
	 &= \int_{\Omega_x} e^{\veps_n^{-2}(G^n(\tilde \om, \om^*)-\delta) +  \veps_n^{-1}   A_T(\hat \xi(\tilde \om), \beta^n(\om^*)}\mu^{\veps_n} (d\tilde \om)\\
	 &\quad - \int_{\{\veps_n K_T(\hat\xi(\tilde \om), \beta^n(\om^*))<-\delta\}} e^{\veps_n^{-2}(G^n(\tilde \om, \om^*)-\delta) +  \veps_n^{-1}  A_T(\hat \xi(\tilde \om), \beta^n(\om^*)}\mu^{\veps_n} (d\tilde \om).
\end{align*}
From Proposition \ref{inftyminus1} (see \eqref{eq:atbd})
	\begin{align*}
	\limsup_{n\to \infty} \veps_n^2 \log \int_{\{\veps_n K_T(\hat\xi(\tilde \om), \beta^n(\om^*))<-\delta\}} e^{\veps_n^{-2}G^n(\tilde \om, \om^*) +  \veps_n^{-1} A_T(\hat \xi(\tilde \om), \beta^n(\om^*))}\mu^{\veps_n} (d\tilde \om)=-\infty.
	\end{align*}
	Thus to prove \eqref{eq:uppvn1} it suffice to show that, $ \PP^*$ a.s., 
	\begin{equation}
		\liminf_{n\to \infty}  \veps_n^2 \log \int_{\Omega_x} e^{\veps_n^{-2} G^n(\tilde \om, \om^*) +  \veps_n^{-1}   A_T(\hat \xi(\tilde \om), \beta^n(\om^*)}\mu^{\veps_n} (d\tilde \om) \ge -\inf_{\eta \in \clc_d} \left[ H(\eta, \xi_0,  v) +\phi(\eta) + J(\eta)\right] .
	\end{equation}
	However the above is an immediate consequence of Lemma \ref{lliminf}. This completes the proof of \eqref{eq:uppvn1}. \hfill \qed\\

Finally we complete the proof of Lemma \ref{lem:keycgce}.\\

{\bf Completing the proof of Lemma \ref{lem:keycgce}.}

As noted above \eqref{eq:vn1phi}, in order to prove Lemma \ref{lem:keycgce} it suffices to show \eqref{eq:vn1phi} for every $\phi \in C_b(\clc_d)$.
Also, for this it is enough to show \eqref{eq:lowvn1} and \eqref{eq:uppvn1}. The inequality in \eqref{eq:lowvn1} was shown in Section \ref{sec:pfoflowvn} and the proof of the inequality in \eqref{eq:uppvn1} was provided in Section \ref{sec:pfofuppvn}. Combining these we have Lemma \ref{lem:keycgce}. \hfill \qed

\section{Proof of Theorem \ref{thm:main}.}
\label{sec:pfmainth}
In order to prove the theorem it suffices to show \eqref{eq:levset} and  \eqref{eq:mainvar}. Proof of  \eqref{eq:mainvar} is given in Section \ref{sec:pfmain} while the proof of  \eqref{eq:levset}  is provided in Section \ref{sec:cptlev}.

\subsection{Proof of \eqref{eq:mainvar}.}
\label{sec:pfmain}
Let $\{\veps_n\}_{n \in \NN}$ be a sequence of positive reals such that $\veps_n \to 0$ as $n\to \infty$.
To show \eqref{eq:mainvar} it suffices to show that for every $G \in C_b(\RR)$
\begin{equation}
	\label{eq:ldpupp}
    \liminf_{n\to \infty}-\veps_n^2 \log \EE_{\PP^{\veps_n}}\left[ \exp\left\{ - \veps_n^{-2} G(V^{\veps_n}[\phi])\right\}\right]
   	\ge \inf_{z\in \RR}\{ G(z) + I^{\phi}(z)\},
\end{equation}
\begin{equation}
	\label{eq:ldplow}
    \limsup_{n\to \infty}-\veps_n^2 \log \EE_{\PP^{\veps_n}}\left[ \exp\left\{ - \veps_n^{-2} G(V^{\veps_n}[\phi])\right\}\right]
   	\le \inf_{z\in \RR}\{ G(z) + I^{\phi}(z)\}.
\end{equation}
We begin with the proof of \eqref{eq:ldpupp}.
Fix $\delta \in (0,1)$ and using \eqref{eq:repboudup} choose $(\tilde u_n, \tilde v_n) \in \cla^k\times \cla^m$ such that
\begin{equation}\label{eq:delopt}
	\begin{aligned}
	&-\veps_n^2 \log \EE_{\PP^{\veps_n}}\left[ \exp\left\{ - \veps_n^{-2} G(V^{\veps_n}[\phi])\right\}\right]\\
	&\ge \EE_{\PP^{\veps_n}}\left[G(\bar V^{\veps_n, \tilde u_n,\tilde v_n}[\phi]) + \frac{1}{2} \int_0^T (\|\tilde u_n(s)\|^2 + \|\tilde v_n(s)\|^2) ds \right] -\delta.
	\end{aligned}
\end{equation}
Note that
\begin{equation}
	\label{eq:unifmombd}
	\sup_{n\in \NN} \EE_{\PP^{\veps_n}}\left[ \int_0^T (\|\tilde u_n(s)\|^2 + \|\tilde v_n(s)\|^2) ds \right]
	\le 2(2 \|G\|_{\infty}+1) \doteq c_G.
\end{equation}
We now use a standard localization argument (cf. \cite[Theorem 3.17]{budhiraja2019analysis}). For $M\in (0,\infty)$
let
$$\tau^n_M \doteq  \inf\{t \ge 0: \int_0^t (\|\tilde u_n(s)\|^2 + \|\tilde v_n(s)\|^2) ds \ge M\}$$
and define
$$\tilde u_{n,M}(s) \doteq \tilde u_n(s)1_{[0, \tau^n_M]}(s), \;\; \tilde v_{n,M}(s) \doteq \tilde v_n(s)1_{[0, \tau^n_M]}(s), \; s \in [0,T].$$
Denoting the expectation on the right side of  \eqref{eq:delopt} by $R^n$ and denoting the corresponding expectation, with 
$(\tilde u_n, \tilde v_n)$ replaced by $(\tilde u_{n,M}, \tilde v_{n,M})$, as $R^{n,M}$ we see that
\begin{align*}
R^n - R^{n,M} &\ge - \|G\|_{\infty} \PP^{\veps_n}(\tau^n_M \le T) \\
&= - \|G\|_{\infty} \PP^{\veps_n}(\int_0^T (\|\tilde u_n(s)\|^2 + \|\tilde v_n(s)\|^2) ds \ge M)  \ge  - \|G\|_{\infty} \frac{c_G}{M},
\end{align*}
where the last inequality uses \eqref{eq:unifmombd}.
Now choose $M$ such that $\|G\|_{\infty} \frac{c_G}{M} \le \delta$ and denote $\tilde u_{n,M} = u_n$, $\tilde v_{n,M} = v_n$. Then
\begin{equation}\label{eq:deloptrev}
	\begin{aligned}
	&-\veps_n^2 \log \EE_{\PP^{\veps_n}}\left[ \exp\left\{ - \veps_n^{-2} G(V^{\veps_n}[\phi])\right\}\right]\\
	&\ge \EE_{\PP^{\veps_n}}\left[G(\bar V^{\veps_n, u_n, v_n}[\phi]) + \frac{1}{2} \int_0^T (\|u_n(s)\|^2 + \| v_n(s)\|^2) ds \right] -2\delta.
	\end{aligned}
\end{equation}
Note that $\{(u_n, v_n)\}$ is a sequence of  $S_M$ valued random variable and since $S_M$ is weakly compact, every subsequence of $\{(u_n, v_n)\}$ has a weakly convergent subsubsequence. It suffices to show \eqref{eq:ldpupp} along such a  subsubsequence which we denote once more as $\{n\}$. 
Denoting the limit as $(u,v)$, given on some probability space $(\Om^0, \clf^0, \PP^0)$, we have from Lemma \ref{lem:keycgce}
that, as $n\to \infty$,
  $\bar V^{\veps_n, u_n,v_n}[\phi] \to V_0^{u, v}[\phi]$, in distribution.
  Using the fact that $G \in C_b(\RR)$ and Fatou's lemma, we now have
  \begin{align*}
 & \liminf_{n\to \infty}	\EE_{\PP^{\veps_n}}\left[G(\bar V^{\veps_n, u_n, v_n}[\phi]) + \frac{1}{2} \int_0^T (\|u_n(s)\|^2 + \| v_n(s)\|^2) ds\right]\\
 &\ge \EE_{\PP^{0}}\left[G( V^{ u, v}_0[\phi]) + \frac{1}{2} \int_0^T (\|u(s)\|^2 + \| v(s)\|^2) ds\right]\\
  &\ge \EE_{\PP^{0}}\left[G( V^{ u, v}_0[\phi]) + I^{\phi}(V^{ u, v}_0[\phi])\right]
  \ge \inf_{z\in \RR}[G(z) + I^{\phi}(z)],
  \end{align*}
  where the second inequality uses the fact that, by definition $(u,v) \in \cls(V^{ u, v}_0[\phi])$ a.s. 
  Combining the above display with \eqref{eq:deloptrev} and recalling that $\delta>0$ is arbitrary, we have \eqref{eq:ldpupp}.
  
  We now give the proof of \eqref{eq:ldplow}.
  Fix $\delta \in (0,1)$ and let $z^* \in \RR$ be such that
  \begin{equation}\label{eq:eq153}
  	 G(z^*)  + I^{\phi}(z^*) \le \inf_{z\in \RR}[G(z) + I^{\phi}(z)]+\delta.
  \end{equation}
  Now choose $(\varphi, \psi) \in \cls(z^*) $ such that
  \begin{equation}\label{eq:eq152}
  	\frac{1}{2} \int_0^T \|\varphi(t)\|^2 dt + \frac{1}{2} \int_0^T \|\psi(t)\|^2 dt \le I^{\phi}(z^*) + \delta.
  \end{equation}
  Since $(\varphi, \psi)  \in \cla_k\times \cla_m$ (as they are non-random and square-integrable), we have from \eqref{eq:repboudup} that, for every $n\in \NN$,
  \begin{equation}\label{eq:eq149}
	\begin{aligned}
		&-\veps_n^2 \log \EE_{\PP^{\veps_n}}\left[ \exp\left\{ - \veps_n^{-2} G(V^{\veps_n}[\phi])\right\}\right]\\
	&\le\EE_{\PP^{\veps_n}}\left[G(\bar V^{\veps_n, \varphi,\psi}[\phi]) + \frac{1}{2} \int_0^T (\|\varphi(s)\|^2 + \|\psi(s)\|^2) ds \right].
	\end{aligned}
  \end{equation}
  Also, from Lemma \ref{lem:keycgce}, as $n\to \infty$,
  $\bar V^{\veps_n, \varphi,\psi}[\phi] \to V_0^{\phi, \psi}[\phi]$, in distribution.
  Since $(\varphi, \psi) \in \cls(z^*) $, \eqref{eq:clszch} holds with $z$ replaced with $z^*$ and so 
  $V_0^{\phi, \psi}[\phi]=z^*$.
  Thus sending $n\to \infty$ in \eqref{eq:eq149}, we have
  \begin{align*}
&  \limsup_{n\to \infty}-\veps_n^2 \log \EE_{\PP^{\veps_n}}\left[ \exp\left\{ - \veps_n^{-2} G(V^{\veps_n}[\phi])\right\}\right]\\
  	&\le G(z^*) + \frac{1}{2} \int_0^T (\|\varphi(s)\|^2 + \|\psi(s)\|^2) ds \le G(z^*) + I^{\phi}(z^*) + \delta \le \inf_{z\in \RR}[G(z) + I^{\phi}(z)]+2\delta,
\end{align*}
where the second inequality uses \eqref{eq:eq152} while the third uses \eqref{eq:eq153}.
Since $\delta>0$ is arbitrary, we have \eqref{eq:ldplow}, and, together with \eqref{eq:ldpupp}, completes the proof of \eqref{eq:mainvar}. \hfill \qed

\subsection{Proof of \eqref{eq:levset}. }
\label{sec:cptlev}
Fix $\phi \in C_b(\clc_d)$ and $M \in (0,\infty)$. Consider the set
$\{z\in \RR: I^{\phi}(z) \le M\} \doteq E_M$ and let $\{z_n\}_{n\in \NN}$ be a sequence in this set.
Since  for each $n \in \NN$, $I^{\phi}(z_n) \le M$, we can find $(\varphi_n, \psi_n) \in \cls(z_n) \subset \cll^2_k \times \cll^2_m$ such that
\begin{equation}\label{eq:eq320}
\frac{1}{2} \int_0^T (\|\varphi_n(s)\|^2 + \|\psi_n(s)\|^2) ds \le M+ \frac{1}{n}.
\end{equation}
Since $(\varphi_n, \psi_n) \in \cls(z_n)$,
\begin{equation}\label{eq:forzn}
z_n = V_0^{\varphi_n, \psi_n}[\phi] = \inf_{\eta \in \clc_d} \left[ H(\eta, \xi_0^{\varphi_n}, \psi_n) + \phi(\eta) + J(\eta)\right]
	- \inf_{\eta \in \clc_d} \left[ H(\eta, \xi_0^{\varphi_n}, \psi_n)  + J(\eta)\right].
\end{equation}
Note that, we can write
\begin{align*}
	H(\eta, \xi_0^{\varphi_n}, \psi_n) &= \frac{1}{2} \int_0^T \|h(\eta(s)) - h(\xi_0^{\varphi_n}(s)) - \psi_n(s)\|^2 ds \\
	&= \frac{1}{2} \int_0^T \|h(\eta(s)) - h(\xi_0^{\varphi_n}(s)) \|^2 ds
	- \int_0^T [h(\eta(s)) - h(\xi_0^{\varphi_n}(s))]\cdot \psi_n(s) ds\\
	&\quad + \frac{1}{2} \int_0^T \|\psi_n(s)\|^2 ds \\
	&= \tilde H(\eta, \xi_0^{\varphi_n}, \psi_n) + \frac{1}{2} \int_0^T \|\psi_n(s)\|^2 ds,
\end{align*}
where for $\eta, \tilde \eta \in \clc_d$ and $\psi \in \cll^2_m$
$$\tilde H(\eta, \tilde \eta, \psi) \doteq \frac{1}{2} \int_0^T \|h(\eta(s)) - h(\tilde \eta(s)) \|^2 ds
	- \int_0^T [h(\eta(s)) - h(\tilde \eta(s))]\cdot \psi(s) ds.$$
From \eqref{eq:forzn} and relation between $H$ and $\tilde H$ it follows that
\begin{equation}\label{eq:forznn}
z_n\doteq  \inf_{\eta \in \clc_d} \left[ \tilde H(\eta, \xi_0^{\varphi_n}, \psi_n) + \phi(\eta) + J(\eta)\right]
	- \inf_{\eta \in \clc_d} \left[ \tilde H(\eta, \xi_0^{\varphi_n}, \psi_n)  + J(\eta)\right].
\end{equation}

Also, from \eqref{eq:eq320} it follows that $\{(\varphi_n, \psi_n)\}_{n\in \NN} \subset S_{2(M+1)}$. Since $S_{2(M+1)}$ is compact, we can find a subsequence along which $(\varphi_n, \psi_n)$ converges to some $(\varphi, \psi) \in S_{2(M+1)}$. In fact, from
\eqref{eq:eq320} and lower semicontinuity it follows that $(\varphi, \psi) \in S_{2M}$. 
Define
\begin{equation}\label{eq:forzstar}
	\begin{aligned}
z^* \doteq V_0^{\varphi, \psi}[\phi] &= \inf_{\eta \in \clc_d} \left[ H(\eta, \xi_0^{\varphi}, \psi) + \phi(\eta) + J(\eta)\right]
	- \inf_{\eta \in \clc_d} \left[ H(\eta, \xi_0^{\varphi}, \psi)  + J(\eta)\right]\\
	&= \inf_{\eta \in \clc_d} \left[ \tilde H(\eta, \xi_0^{\varphi}, \psi) + \phi(\eta) + J(\eta)\right]
	- \inf_{\eta \in \clc_d} \left[ \tilde H(\eta, \xi_0^{\varphi}, \psi)  + J(\eta)\right].
\end{aligned}
\end{equation}
In order to complete the proof of \eqref{eq:levset} it suffices to show that
\begin{equation}\label{eq:zntozstar}
	\mbox{ as } n \to \infty, \;\; z_n \to z^*.
\end{equation}

We first argue that in the infimum appearing in (the second line of )\eqref{eq:forzstar} and \eqref{eq:forznn}, $\{\eta \in \clc_d\}$ can be replaced by 
$\{\eta \in K\}$ for some fixed compact set $K$.
To see this, note that, with $\xi^*$ as in \eqref{eq:intro1},
\begin{align*}
	\inf_{\eta \in \clc_d} \left[ \tilde H(\eta, \xi_0^{\varphi_n}, \psi_n) + \phi(\eta) + J(\eta)\right]&\le
	\tilde H(\xi^*, \xi_0^{\varphi_n}, \psi_n) + \|\phi\|_{\infty}+ J(\xi^*).
\end{align*}
Also, note that $J(\xi^*) =0$ and
\begin{align*}
	\tilde H(\xi^*, \xi_0^{\varphi_n}, \psi_n) &= \frac{1}{2} \int_0^T \|h(\xi^*(s)) - h(\xi_0^{\varphi_n}(s)) \|^2 ds
-  \int_0^T [h(\xi^*(s)) - h(\xi_0^{\varphi_n}(s))]\cdot \psi_n(s) ds\\
&\le \int_0^T \|h(\xi^*(s)) - h(\xi_0^{\varphi_n}(s)) \|^2 + \frac{1}{2}\int_0^T \|\psi_n(s)\|^2 ds\\
	&\le 2T\|h(\xi^*(\cdot))\|_*^2 +  2 \int_0^T \|h(\xi_0^{\varphi_n}(s))\|^2 ds + \frac{1}{2} \int_0^T \|\psi_n(s)\|^2 ds\\
	&\le 2T\|h(\xi^*(\cdot))\|_*^2 +  \kappa_1   (M+1) \doteq \kappa_2,
\end{align*}
where $\kappa_1 \in (0,\infty)$  depends only on $x_0, T$ and the linear growth coefficients of $h, b, \sigma$.
Thus, taking $\kappa_3 \doteq \kappa_2 + \|\phi\|_{\infty}+ 1$, we see that
the first infimum in 
 \eqref{eq:forznn} can be replaced by infimum over the set
$$K_0^n \doteq \{\eta \in \clc_d: \tilde H(\eta, \xi_0^{\varphi_n}, \psi_n) + \phi(\eta) + J(\eta) \le \kappa_3\}.$$
Using the relation $a\cdot b \ge -\frac{1}{4}\|a\|^2 - \|b\|^2$,
\begin{align*}
	\tilde H(\eta, \xi_0^{\varphi_n}, \psi_n) &\ge \frac{1}{2} \int_0^T \|h(\eta(s)) - h(\xi_0^{\varphi_n}(s)) \|^2 \\
	&\quad - \frac{1}{4} \int_0^T \|h(\eta(s)) - h(\xi_0^{\varphi_n}(s)) \|^2
	- \int_0^T \|\psi_n(s)\|^2 ds \ge -2M.
\end{align*}
Thus,  with $\kappa_4 \doteq  \kappa_3 + \|\phi\|_{\infty} + 1+2M$,
$K_0^n$ is contained in the compact set
$$K \doteq \{\eta \in \clc_d:  J(\eta) \le \kappa_4\}.$$
Thus the first infimum in 
 \eqref{eq:forznn} can be replaced by infimum over the set $K$. Similarly, the second infimum in \eqref{eq:forznn} and both infima in (second line of)
 \eqref{eq:forzstar} can be replaced by infima over the same compact set $K$.
 Note that if $B_n, B$ are maps from $K \to \RR$ such that $B_n \to B$ uniformly on compacts, then
 $$\inf_{\eta\in K} [B_n(\eta)+ J(\eta)] \to \inf_{\eta\in K} [B(\eta)+ J(\eta)].$$
 Thus, to complete the proof of \eqref{eq:zntozstar} it suffices to show that, 
 \begin{equation}\label{eq:eq501}
 	\mbox{ as } n \to \infty,\;\;  \tilde H(\eta, \xi_0^{\varphi_n}, \psi_n) \to \tilde H(\eta, \xi_0^{\varphi}, \psi), \; \mbox{ uniformly for } \eta \in K.
 \end{equation}
 For this note that from Assumption \ref{assu:main} and the convergence of $\varphi_n \to \varphi$ it follows that,  $\xi_0^{\varphi_n} \to \xi_0^{\varphi}$ in $\clc_d$ as $n\to \infty$.
 Also, since $K$ is compact, $\sup_{\eta \in K} \|\eta\|_* <\infty$. Combining these observations with the continuity and linear growth of $h$ we have that, as $n\to \infty$,
 \begin{equation} \label{eq:eq458}\frac{1}{2} \int_0^T \|h(\eta(s)) - h(\xi_0^{\varphi_n}(s)) \|^2 ds  \to \frac{1}{2} \int_0^T \|h(\eta(s)) - h(\xi_0^{\varphi}(s)) \|^2 ds\end{equation}
 uniformly for $\eta \in K$.
 Also, writing
 $$\int_0^T  h(\xi_0^{\varphi_n}(s))\cdot \psi_n(s) ds = 
 \int_0^T  [h(\xi_0^{\varphi_n}(s)) - h(\xi_0^{\varphi}(s))]\cdot \psi_n(s) ds +
 \int_0^T  h(\xi_0^{\varphi}(s))\cdot \psi_n(s) ds$$
 and using the convergence $(\xi_0^{\varphi_n}, \psi_n) \to (\xi_0^{\varphi}, \psi)$, the bound in \eqref{eq:eq320}, and the Lipschitz property of $h$, we have that, as $n\to \infty$
 \begin{equation}\label{eq:eq459}\int_0^T  h(\xi_0^{\varphi_n}(s))\cdot \psi_n(s) ds \to \int_0^T  h(\xi_0^{\varphi}(s))\cdot \psi(s) ds.\end{equation}
 Finally we claim that, as $n\to \infty$, 
 \begin{equation}\label{eq:eq500}
	 \int_0^T h(\eta(s)) \cdot \psi^n(s) ds \to \int_0^T h(\eta(s)) \cdot \psi(s) ds,\end{equation}
 uniformly for $\eta \in K$.
 Indeed, to show the claim, it suffices to show that if $\eta^n \to \eta$ in $K$ then
 \begin{equation}\label{eq:eq455}
	 \int_0^T h(\eta^n(s)) \cdot \psi^n(s) ds \to \int_0^T h(\eta(s)) \cdot \psi(s) ds.
	 \end{equation}
 Write the right hand side as
 $$\int_0^T h(\eta^n(s)) \cdot \psi^n(s) ds = \int_0^T (h(\eta^n(s))-h(\eta(s))) \cdot  \psi^n(s) ds
 +  \int_0^T h(\eta(s)) \cdot \psi^n(s) ds.$$
 The convergence in \eqref{eq:eq455} is now immediate from the above display on using, the Lipschitz  property of $h$, the bound in \eqref{eq:eq320}, and the convergence of
 $(\eta^n, \psi^n)$ to $(\eta, \psi)$, which proves the claim.
 Combining the  convergence properties in \eqref{eq:eq458}, \eqref{eq:eq459}, and \eqref{eq:eq500}, we now have the statement in \eqref{eq:eq501}, which, as noted previously, proves \eqref{eq:levset}. \hfill \qed

\noindent \textbf{Acknowledgement:} 
The research of AB was supported in part by the NSF (DMS-1814894 and DMS-1853968). 
Part of this research was carried out when AB was visiting the International Centre for
	Theoretical Sciences-TIFR  and Imperial College, London, and he will like to thank his hosts Amit Apte and Dan Crisan, respectively,  at these institutes,  for their generous hospitality. AA and ASR acknowledge the
	support of the Department of Atomic Energy, Government of India, under
	projects no.12-R\&D-TFR-5.10-1100, and no.RTI4001.
 \bibliographystyle{plain}
\bibliography{filt-ldp3}
\end{document}